\documentclass[12pt]{article}
\usepackage{amsthm, amsmath, amssymb, amsfonts, latexsym}
\date{}

\title{On a multi-phase Stefan problem in the half-line with different boundary conditions at the fixed boundary}
\author{E.\,Yu. Panov \\
St. Petersburg Department of V.\,A.~Steklov Institute of Mathematics \\ of the Russian Academy of Sciences,
St. Petersburg, Russia; \\
Yaroslav-the-Wise Novgorod State University, \\ Veliky Novgorod, Russia}

\theoremstyle{plain}
\newtheorem{theorem}{Theorem}[section]
\newtheorem{lemma}{Lemma}[section]
\newtheorem{proposition}{Proposition}[section]
\newtheorem{corollary}{Corollary}[section]
\theoremstyle{definition}

\newtheorem{remark}{Remark}[section]
\newtheorem{example}{Example}[section]
\numberwithin{equation}{section}

\newcommand{\R}{{\mathbb R}}
\newcommand{\N}{{\mathbb N}}

\newcommand{\Z}{{\mathbb Z}}

\newcommand{\const}{\mathrm{const}}
\begin{document}
\maketitle

\begin{abstract}
We study self-similar solutions of a multi-phase Stefan problem for a heat equation on the half-line $x>0$ with a constant initial data and with Dirichlet, Neumann or Robin boundary condition at the fixed boundary $x=0$. In the case of Dirichlet boundary condition we prove that a nonlinear algebraic system for determination of the free boundaries is gradient one and the corresponding potential is an explicitly written strictly convex and coercive function. Therefore, there exists a unique minimum point of the potential, coordinates of this point determine free boundaries and provide the desired solution. In the case of Neumann boundary condition the study is complicated by the fact that number of phase transitions undergone by a solution (called its type) cannot be directly determined from the boundary data. For each fixed type $n$ the system for determination of the free boundaries is again gradient and the corresponding potential is proved to be strictly convex and coercive, but in some wider non-physical domain. On the base of these properties it is proved that the Dirichlet-to-Neumann map is a strictly increasing continuous function. This allows to establish existence and uniqueness of a solution to Stefan-Neumann problem, and to specify the type of this solution. The same technique is further applied to Stefan-Robin problem with a positive connection coefficient. In the last section we also study some particular ill-posed Stefan-Robin problem with a negative connection coefficient, using again the variational approach developed in the previous sections.
\end{abstract}



\section{Stefan problem with Dirichlet boundary condition}\label{sec1}

In a quarter-plane $\Pi_+=\{ \ (t,x)\in\R^2 \ | \ t,x>0 \ \}$ we consider the multi-phase Stefan problem for the heat equation
\begin{equation}\label{1}
u_t=a_i^2u_{xx}, \quad u_i<u<u_{i+1}, \ i=0,1,\ldots,
\end{equation}
where $u_0$ is an initial temperature while $u_i$, $i\in\N$, are temperatures of phase transitions greater than $u_0$ numerated in increasing order. We assume that the set of these temperatures is infinite and that $\lim\limits_{i\to\infty} u_i=+\infty$. The case of finite number of $u_i$ is simpler, so we omit it.
The coefficients $a_i>0$, $i=0,1,\ldots$, are the diffusivity constants. We will study continuous piecewise smooth solutions $u=u(t,x)$ in $\Pi_+$ satisfying (\ref{1}) in the classical sense in the domains $u_i<u(t,x)<u_{i+1}$, $i=0,1,\ldots$, filled with the phases. On the unknown lines $x=x_i(t)$ of phase transitions where $u=u_i$ the following Stefan condition
\begin{equation}\label{St}
d_ix_i'(t)+k_iu_x(t,x_i(t)+)-k_{i-1}u_x(t,x_i(t)-)=0
\end{equation}
is postulated, where $k_i>0$ is the thermal conductivity of the $i$-th phase, while $d_i\ge 0$ is the Stefan number (the latent specific heat) for the $i$-th phase transition. In (\ref{St}) the unilateral limits $u_x(t,x_i(t)+)$, $u_x(t,x_i(t)-)$ on the line $x=x_i(t)$ are taken from the domain corresponding to the warmer/colder phase, respectively. 

It is known that problem (\ref{1}), (\ref{St}) with $u\ge u_0$ reduces to a degenerate nonlinear diffusion  equation (see \cite{Kam}, \cite[Chapter 5]{LSU})
\begin{equation}\label{diff}
G(u)_t-K(u)_{xx}=0,
\end{equation}
where $K(u)$, $G(u)$ are strictly increasing functions on $[u_0,+\infty)$, linear on each interval $(u_i,u_{i+1})$,
$i=\Z_+\doteq\{0\}\cup\N$, with slopes $K'(u)=k_i$, $G'(u)=k_i/a_i^2$, and such that
\[
K(u_i+)-K(u_i-)=0, \quad G(u_i+)-G(u_i-)=d_i, \ i\in\N.
\]
We will study the initial-boundary value problem with constant initial and Dirichlet boundary data
\begin{equation}\label{2}
u(0,x)=u_0 \ \forall x>0, \quad u(t,0)=u_D \ \forall t>0.
\end{equation}
By the invariance of our problem under the transformation group
$(t,x)\to (\lambda^2 t, \lambda x)$, $\lambda\in\R$, $\lambda>0$, it is natural to seek a self-similar solution of problem (\ref{1}), (\ref{St}), (\ref{2}), which has the form $u(t,x)=u(\xi)$, $\xi=x/\sqrt{t}$. In view of (\ref{2}),
\[u(0)=u_D, \quad u(+\infty)\doteq\lim_{\xi\to+\infty} u(\xi)=u_0<u_D.\]
Thus, it is natural to suppose that the function $u(\xi)$ decreases.

For the heat equation
$u_t=a^2 u_{xx}$ a self-similar solution must satisfy the linear ODE $a^2u''=-\xi u'/2$, the general solution of which is
\[
u=C_1F(\xi/a)+C_2, \ C_1,C_2=\const, \mbox{ where } F(\xi)=\frac{1}{\sqrt{\pi}}\int_0^\xi e^{-s^2/4}ds.
\]
This allows to write our solution in the form
\begin{equation}\label{3}
u(\xi)=\left\{\begin{array}{lr} u_i+\frac{u_{i+1}-u_i}{F(\xi_{i+1}/a_i)-F(\xi_i/a_i)}(F(\xi/a_i)-F(\xi_i/a_i)), &
\xi_{i+1}\le\xi<\xi_i, \\ & i=0,\ldots,n-1; \\
u_n-\frac{u_D-u_n}{F(\xi_n/a_n)}(F(\xi/a_n)-F(\xi_n/a_n)), & 0\le\xi<\xi_n,
\end{array}\right.
\end{equation}
where $\xi_0=+\infty$ and $\displaystyle F(+\infty)=\frac{1}{\sqrt{\pi}}\int_0^{+\infty} e^{-s^2/4}ds=1$.
The value $n\in\{0\}\cup\N$ (i.e., the number of phase transitions) will be called the type of the solution and is determined from the condition $u_n<u_D\le u_{n+1}$.

The parabolas $\xi=\xi_i$, $i=1,\ldots,n$, where $u=u_i$, are free boundaries, which must be determined by conditions (\ref{St}).
In the variable $\xi$ these conditions have the form (cf. \cite[Chapter XI]{CJ}, \cite{Wil}), where we replace $u_{n+1}$ by $u_D$ and agree that $\xi_{n+1}=0$
\begin{equation}\label{4}
d_i\xi_i/2+\frac{k_i(u_{i+1}-u_i)F'(\xi_i/a_i)}{a_i(F(\xi_{i+1}/a_i)-F(\xi_i/a_i))}-
\frac{k_{i-1}(u_i-u_{i-1})F'(\xi_i/a_{i-1})}{a_{i-1}(F(\xi_i/a_{i-1})-F(\xi_{i-1}/a_{i-1}))}=0,
\end{equation}
$i=1,\ldots,n$.

In the case $n=1$ system (\ref{4}) reduces to a single equation, which can be uniquely solved. As a result, we obtain the classical Neumann solution of the Stefan problem.
In the general multi-phase case system (\ref{4}) was analysed by D.~G. Wilson in \cite{Wil}. He suggested
to solve the equations (\ref{4}) sequentially, starting with the first one, which allows to express the variables
$\xi_i$, $i=2,\ldots,n$, as functions of the first variables $\xi_1$. Then the last equation in (\ref{4}) turns to a single equation for determining the unknown $\xi_1$. In \cite{Wil} the author justified (by rather complicated analysis) this method showing that all the obtained  equations can be uniquely resolved, so that the system (\ref{4}) has a unique solution. The methods of \cite{Wil} was further developed in paper \cite{ST} for the case of Neumann boundary conditions.

In the present paper we are going to apply a variational approach to study the nonlinear system (\ref{4}), based on the observation that this system is a gradient one and coincides with the equality
$\nabla E_n(\bar\xi)=0$, where the function (potential)
\begin{align}\label{5}
E_n(\bar\xi)=-\sum_{i=0}^{n-1}  k_i(u_{i+1}-u_i)\ln (F(\xi_i/a_i)-F(\xi_{i+1}/a_i))-\nonumber\\  k_n(u_D-u_n)\ln F(\xi_n/a_n)+\sum_{i=1}^n d_i\xi_i^2/4, \\ \nonumber \bar\xi=(\xi_1,\ldots,\xi_n)\in\Omega_n,
\end{align}
the open convex domain $\Omega_n\subset\R^n$ is given by the inequalities $\xi_1>\cdots>\xi_n>0$.
Observe that $E_n(\bar\xi)\in C^\infty(\Omega)$. Since the function $F(\xi)$ takes values in the interval $(0,1)$, all the terms in expression (\ref{5}) are nonnegative while some of them are strictly positive. Therefore, $E_n(\bar\xi)>0$.

\subsection{Coercivity of the function $E_n$ and existence of a solution}

Let us introduce the sub-level sets \[\Omega(c)=\{ \ \bar\xi\in\Omega_n \ | \ E_n(\bar\xi)\le c \ \}, \quad c>0.\]

\begin{proposition}[coercivity]\label{th1}
The sets $\Omega(c)$ are compact for each $c>0$.
In particular, the function $E_n(\bar\xi)$ reaches its minimal value.
\end{proposition}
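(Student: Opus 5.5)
Since $E_n$ is continuous on $\Omega_n$, $\Omega(c)$ is closed relative to $\Omega_n$. So it suffices to show two things: $\Omega(c)$ is bounded, and it stays away from $\partial\Omega_n$. The boundary consists of points where $\xi_n=0$ or $\xi_{i+1}=\xi_i$ for some $i=1,\ldots,n-1$. Then $\Omega(c)$ is closed in $\R^n$ and bounded, hence compact. Minimality follows: pick any $\bar\xi^0\in\Omega_n$ and set $c=E_n(\bar\xi^0)$. The nonempty compact set $\Omega(c)$ carries a minimum of the continuous function $E_n$, and this is a global minimum on $\Omega_n$.

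\emph{Boundedness.} Every term of (\ref{5}) is nonnegative, because $0<F(\xi_i/a_i)-F(\xi_{i+1}/a_i)<1$ and $0<F(\xi_n/a_n)<1$. Hence each term is at most $c$ on $\Omega(c)$. If $d_1>0$, the bound $d_1\xi_1^2/4\le c$ already bounds $\xi_1\ge\xi_i$. The Stefan numbers may vanish, however, so I would rather use the logarithmic term with $i=0$. Since $F(+\infty)=1$, it equals $-k_0(u_1-u_0)\ln(1-F(\xi_1/a_0))$, and $k_0(u_1-u_0)>0$. Bounding it by $c$ gives $1-F(\xi_1/a_0)\ge e^{-c/(k_0(u_1-u_0))}>0$. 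Because $F(\xi)\to1$ as $\xi\to+\infty$, this forces $\xi_1\le R(c)$. Then $0<\xi_n<\cdots<\xi_1\le R(c)$.

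\emph{Distance from the boundary.} Apply the same argument to the remaining logarithmic terms. The coefficients $k_i(u_{i+1}-u_i)$ and $k_n(u_D-u_n)$ are all strictly positive, since $u_D>u_n$ by the choice of the type $n$. So on $\Omega(c)$ we get $F(\xi_n/a_n)\ge\delta$ and $F(\xi_i/a_i)-F(\xi_{i+1}/a_i)\ge\delta$ for $i=1,\ldots,n-1$, with $\delta=\delta(c)>0$. The function $F$ is Lipschitz with constant $F'(0)=1/\sqrt\pi$, since $F'(\xi)=e^{-\xi^2/4}/\sqrt\pi\le1/\sqrt\pi$. This yields $\xi_n\ge a_n\sqrt\pi\,\delta$ and $\xi_i-\xi_{i+1}\ge a_i\sqrt\pi\,\delta$. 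Thus $\Omega(c)$ lies in the compact set $\{\xi_1\le R,\ \xi_n\ge\varepsilon,\ \xi_i-\xi_{i+1}\ge\varepsilon\}\subset\Omega_n$, where $\varepsilon=\sqrt\pi\,\delta\min_i a_i$.

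The only point needing care is the bound for large $\xi_1$ when the $d_i$ may vanish, and the term with $\xi_0=+\infty$ handles it. The rest is routine.
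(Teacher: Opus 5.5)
Your proposal is correct and follows essentially the same route as the paper. You use nonnegativity of the terms of (\ref{5}) to bound each by $c$, use the $i=0$ logarithmic term to bound $\xi_1$, and use the remaining logarithmic terms together with the Lipschitz property of $F$ to keep $\xi_n$ and the gaps $\xi_i-\xi_{i+1}$ away from zero, so that $\Omega(c)$ is a closed subset of a compact set inside $\Omega_n$. The only differences are cosmetic: you use the sharper Lipschitz constant $1/\sqrt\pi$, and you bound $\xi_n$ via Lipschitz rather than via $F^{-1}$.
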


\begin{proof}
If $\bar\xi=(\xi_1,\ldots,\xi_n)\in\Omega(c)$ then
\begin{align}\label{co1a}
 -k_i(u_{i+1}-u_i)\ln (F(\xi_i/a_i)-F(\xi_{i+1}/a_i))\le E(\bar\xi)\le c, \quad i=0,\ldots,n-1, \\
 \label{co1b}
-k_n(u_D-u_n)\ln F(\xi_n/a_n)\le E(\bar\xi)\le c.
 \end{align}
It follows from (\ref{co1b}) that $F(\xi_n/a_n)\ge e^{-c/(k_n(u_D-u_n))}$, which implies
the low bound
\[\xi_n\ge r_1=a_nF^{-1}(e^{-c/(k_n(u_D-u_n))}).\]
Similarly, we derive from (\ref{co1a}) with $i=0$ that
\[1-F(\xi_1/a_0)\ge e^{-c/(k_0(u_1-u_0))}\] (notice that $F(\xi_0/a_0)=F(+\infty)=1$). Therefore,
$F(\xi_1/a_0)\le 1-e^{-c/(k_0(u_1-u_0))}$. This implies the upper bound $\xi_1\le r_2=a_0F^{-1}(1-e^{-c/(k_0(u_1-u_0))})$.

Further, it follows from (\ref{co1a}) that for all $i=1,\ldots,n-1$
\begin{equation}\label{6}
F(\xi_i/a_i)-F(\xi_{i+1}/a_i)\ge \alpha_i\doteq\exp(-c/(k_i(u_{i+1}-u_i))>0.
\end{equation}
Since $F'(\xi)=\frac{1}{\sqrt{\pi}}e^{-\xi^2/4}<1$, the function $F(\xi)$ is Lipschitz with constant $1$, and it follows from (\ref{6}) that
\[
(\xi_i-\xi_{i+1})/a_i\ge F(\xi_i/a_i)-F(\xi_{i+1}/a_i)\ge \alpha_i, \quad i=1,\ldots,n-1,
\]
and we obtain the estimates $\xi_i-\xi_{i+1}\ge\delta_i=a_i\alpha_i>0$. Thus, the set $\Omega(c)$ is contained in a compact
\[
K=\{ \ \bar\xi=(\xi_1,\ldots,\xi_n)\in\R^n \ | \ r_2\ge\xi_1\ge\cdots\ge\xi_n\ge r_1, \ \xi_i-\xi_{i+1}\ge\delta_i \ \forall i=1,\ldots,n-1 \ \}.
\]
Since $E_n(\bar\xi)$ is continuous on $K$, the set $\Omega(c)$ is a closed subset of $K$ and therefore is compact. For $c>N\doteq\inf E_n(\bar\xi)$, this set is not empty and the function $E_n(\bar\xi)$ reaches on it a minimal value, which is evidently equal to $N$.
\end{proof}

We have established the existence of minimal value $E_n(\bar\xi_0)=\min E_n(\bar\xi)$. At the point $\bar\xi_0$ the required condition $\nabla E_n(\bar\xi_0)=0$ is satisfied, and $\bar\xi_0$ is a solution of the system (\ref{4}). The coordinates
of $\bar\xi_0$ determine the solution (\ref{3}) of our Stefan problem. Thus, we have established the following existence result.

\begin{theorem}\label{th1a}
There exists a self-similar solution (\ref{3}) of the problem (\ref{1}), (\ref{St}), (\ref{2}).
\end{theorem}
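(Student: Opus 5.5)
The plan is to obtain the solution as a minimizer of the potential $E_n$. First I fix the type $n$ by the condition $u_n<u_D\le u_{n+1}$. This $n$ exists and is unique because the $u_i$ increase to $+\infty$ and $u_D>u_0$. If $n=0$, formula (\ref{3}) reduces to $u=u_D-(u_D-u_0)F(\xi/a_0)$. This function solves the heat equation, has no free boundaries, and satisfies (\ref{2}), so nothing remains to prove. For $n\ge1$ I invoke Proposition~\ref{th1}: $E_n$ attains its minimum over $\Omega_n$ at some point $\bar\xi_0$. Since $\Omega_n$ is open and $E_n\in C^\infty(\Omega_n)$, we get $\nabla E_n(\bar\xi_0)=0$.

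The main step is to check that $\nabla E_n(\bar\xi)=0$ is exactly system (\ref{4}). I differentiate (\ref{5}) in $\xi_i$, $1\le i\le n$. The variable $\xi_i$ enters three terms: the logarithm with index $i-1$, where it is the lower argument; the logarithm with index $i$, where it is the upper argument (for $i=n$ this is the term $\ln F(\xi_n/a_n)$, consistent with the convention $\xi_{n+1}=0$, $F(0)=0$, $u_{n+1}\to u_D$); and the term $d_i\xi_i^2/4$. The three derivatives are:
\[
\frac{k_{i-1}(u_i-u_{i-1})F'(\xi_i/a_{i-1})}{a_{i-1}(F(\xi_{i-1}/a_{i-1})-F(\xi_i/a_{i-1}))},\qquad
-\frac{k_i(u_{i+1}-u_i)F'(\xi_i/a_i)}{a_i(F(\xi_i/a_i)-F(\xi_{i+1}/a_i))},\qquad \frac{d_i\xi_i}{2}.
\]
Rewriting the denominators with reversed signs, the equation $\partial E_n/\partial\xi_i=0$ is precisely the $i$-th equation of (\ref{4}). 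For $i=1$ the convention $F(\xi_0/a_0)=1$ is used.

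Next I build $u(\xi)$ from (\ref{3}) with these $\xi_i$ and verify that it is the required solution.
\begin{itemize}
\item On each interval, $u$ is of the form $C_1F(\xi/a_i)+C_2$. Hence $u(x/\sqrt t)$ solves $u_t=a_i^2u_{xx}$ there.
\item $u$ is continuous, taking the value $u_i$ at $\xi_i$, with $u(0)=u_D$ and $u(+\infty)=u_0$.
\item The coefficients in (\ref{3}) are positive because $\xi_{i+1}<\xi_i$ and $F$ is increasing. So $u$ is strictly decreasing, and on $(\xi_{i+1},\xi_i)$ it takes values in $(u_i,u_{i+1})$. Thus the phases are correctly placed, and the type is indeed $n$.
\item With $x_i(t)=\xi_i\sqrt t$, the Stefan condition (\ref{St}) becomes, after multiplying by $\sqrt t$, exactly (\ref{4}). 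This is the standard self-similar reduction cited from \cite{CJ}, \cite{Wil}.
\end{itemize}

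I expect no real obstacle. The only delicate points are the sign bookkeeping in the gradient computation and the boundary conventions at $i=1$ and $i=n$. Both are handled by the conventions $\xi_0=+\infty$ and $\xi_{n+1}=0$ fixed before (\ref{4}).
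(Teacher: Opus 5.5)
Your proposal is correct and follows the paper's argument: Proposition~\ref{th1} gives a minimizer of $E_n$ in the open set $\Omega_n$, the gradient vanishes there, $\nabla E_n=0$ is exactly system (\ref{4}), and the coordinates define (\ref{3}). Your explicit gradient computation, the separate treatment of $n=0$, and the check of phase placement are details the paper leaves implicit; one harmless slip is that the coefficient $(u_{i+1}-u_i)/(F(\xi_{i+1}/a_i)-F(\xi_i/a_i))$ in (\ref{3}) is negative, not positive, and that is precisely why $u$ decreases.
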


\subsection{Convexity of the function $E_n$ and uniqueness of a solution}

In this section we prove that the function $E_n(\bar\xi)$ is strictly convex. Since a strictly convex function can have at most one critical point (and it is necessarily a global minimum), the system (\ref{4}) has at most one solution, that is, a self-similar solution (\ref{3}) of the problem (\ref{1}), (\ref{St}), (\ref{2}) is unique. We will need the following simple lemma proven in \cite{Pan1} (see also \cite{Pan2}). For the sake of completeness we provide it with the proof.

\begin{lemma}\label{lem1}
The function $P(x,y)=-\ln (F(x)-F(y))$ is strictly convex in the half-plane $x>y$.
\end{lemma}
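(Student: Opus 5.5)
Write $F(x)-F(y)=\int_y^x f(s)\,ds$ with $f=F'=\frac{1}{\sqrt\pi}e^{-s^2/4}$. The point is that $f$ is log-concave, and log-concavity of an integrand passes to the integral over an interval in the pair $(y,x)$ of endpoints. I would prove the Hessian of $P$ is positive definite directly.

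Put $D=F(x)-F(y)>0$. Then $P_x=-f(x)/D$, $P_y=f(y)/D$, and
\[
P_{xx}=\frac{f(x)^2-f'(x)D}{D^2},\quad P_{yy}=\frac{f(y)^2+f'(y)D}{D^2},\quad P_{xy}=-\frac{f(x)f(y)}{D^2}.
\]
Since $f'(s)=-\frac{s}{2}f(s)$, these become
\[
D^2P_{xx}=f(x)^2+\tfrac{x}{2}f(x)D,\qquad D^2P_{yy}=f(y)^2-\tfrac{y}{2}f(y)D.
\]

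The key inequalities are $f(x)+\frac{x}{2}D>0$ and $f(y)-\frac{y}{2}D>0$. For the first, use the log-concavity bound $f(s)\le f(x)e^{-(x/2)(s-x)}$. This is the tangent line of the concave function $\ln f$ at $x$, and it gives $f(s)\le f(x)e^{(x/2)(x-s)}$ for $s<x$. I would instead argue via the monotonicity below, which is cleaner.

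Let $g(y)=f(x)+\frac{x}{2}(F(x)-F(y))$ for $y<x$. At $y=x$ it equals $f(x)>0$, and $g'(y)=-\frac{x}{2}f(y)$.
\begin{itemize}
\item If $x\le0$, $g$ is nondecreasing in $y$. Moving $y$ down from $x$ only decreases $g$, but $g(-\infty)=f(x)+\frac{x}{2}F_{\text{tot}}$ with $F(x)-F(-\infty)=\int_{-\infty}^x f$. The Mills-ratio inequality $\int_{-\infty}^x f<f(x)/(|x|/2)$ for $x<0$ gives $g(-\infty)>0$, hence $g>0$ throughout.
\item If $x>0$, $g$ is clearly positive.
\end{itemize}
The second inequality follows symmetrically, using $\int_y^\infty f<2f(y)/y$ for $y>0$. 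So $P_{xx},P_{yy}>0$.

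For the determinant, multiply by $D^4$. A short expansion gives
\[
D^4\det\nabla^2P=\Bigl(f(x)+\tfrac{x}{2}D\Bigr)\Bigl(f(y)-\tfrac{y}{2}D\Bigr)f(x)f(y)-f(x)^2f(y)^2 = f(x)f(y)D\Bigl[\tfrac{x}{2}f(y)-\tfrac{y}{2}f(x)-\tfrac{xy}{4}D\Bigr].
\]
So the task reduces to showing
\[
h(x,y)=\tfrac{x}{2}f(y)-\tfrac{y}{2}f(x)-\tfrac{xy}{4}D>0\quad\text{for } x>y .
\]

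Fix $y$ and view $h$ as a function of $x$. At $x=y$, $h=0$. Differentiating,
\[
\partial_x h=\tfrac12 f(y)-\tfrac y2 f'(x)-\tfrac y4 D-\tfrac{xy}{4}f(x)=\tfrac12f(y)-\tfrac y4D ,
\]
because the $f'$ term cancels the last term. Now $\tfrac12 f(y)-\tfrac y4 D=\tfrac12\bigl(f(y)-\tfrac y2 D\bigr)>0$ by the second inequality already proved. Hence $h$ is strictly increasing in $x$, so $h>0$ for $x>y$, and the Hessian is positive definite.

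The main obstacle is the pair of Mills-type bounds $\int_{-\infty}^x f<-2f(x)/x$ ($x<0$) and $\int_y^\infty f<2f(y)/y$ ($y>0$). Both follow from $f(s)<\frac{s}{y}f(s)=-\frac{2}{y}f'(s)$ for $s>y>0$, integrated over $(y,\infty)$, and symmetrically for the other. The remaining work is verifying the determinant algebra.
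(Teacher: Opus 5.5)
Your proof is correct, but it takes a different route from the paper. Throughout, $D=F(x)-F(y)$ as in your notation. You check positive definiteness by hand:
\begin{itemize}
\item the diagonal entries through the Gaussian tail (Mills-ratio) bounds $\int_y^{\infty}F'<2F'(y)/y$, with a case split on the signs of $x$ and $y$;
\item the determinant by reducing it to $h(x,y)>0$, which follows from $h(y,y)=0$ and $\partial_x h=\tfrac12\bigl(F'(y)-\tfrac y2 D\bigr)>0$, recycling your second diagonal inequality.
\end{itemize}

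The paper needs neither tail estimates nor sign cases. One application of the Cauchy mean value theorem gives $F'(x)-F'(y)=-\tfrac z2 D$ for some $z\in(y,x)$. Hence $D^2\,\nabla^2P=\mathrm{diag}\bigl(F'(x)D\tfrac{x-z}{2},\,F'(y)D\tfrac{z-y}{2}\bigr)+F'(x)F'(y)\left(\begin{smallmatrix}1&-1\\-1&1\end{smallmatrix}\right)$, which is a positive diagonal matrix plus a positive semidefinite one. In particular, your two key inequalities drop out at once from $F'(x)+\tfrac x2 D=F'(y)+\tfrac{x-z}{2}D$ and $F'(y)-\tfrac y2 D=F'(x)+\tfrac{z-y}{2}D$.

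The paper's decomposition is shorter. It only uses that $F''/F'$ is strictly decreasing, so it works for any strictly log-concave $F'$. Your computation is more explicit but leans on the exact identity $F''(s)=-\tfrac s2F'(s)$, for instance in the cancellation in $\partial_x h$.

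One economy is available in your version. The first diagonal inequality, together with the Mills bound for $x<0$, is unnecessary: $P_{yy}>0$ and $\det\nabla^2P>0$ already force a symmetric $2\times 2$ matrix to be positive definite.
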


\begin{proof}
The function $P(x,y)$ is infinitely differentiable in the domain $x>y$. To prove the lemma, we need to establish that the Hessian $D^2 P$ is positive definite at every point. By the direct computation we find
\begin{align*}
\frac{\partial^2}{\partial x^2} P(x,y)=\frac{(F'(x))^2-F''(x)(F(x)-F(y))}{(F(x)-F(y))^2}, \\
\frac{\partial^2}{\partial y^2} P(x,y)=\frac{(F'(y))^2-F''(y)(F(y)-F(x))}{(F(x)-F(y))^2}, \
\frac{\partial^2}{\partial x\partial y} P(x,y)=-\frac{F'(x)F'(y)}{(F(x)-F(y))^2}.
\end{align*}
We have to prove positive definiteness of the matrix $Q=(F(x)-F(y))^2 D^2 P(x,y)$ with the components
\begin{align*}
Q_{11}=(F'(x))^2-F''(x)(F(x)-F(y)), \\ Q_{22}=(F'(y))^2-F''(y)(F(y)-F(x)), \ Q_{12}=Q_{21}=-F'(x)F'(y).
\end{align*}
Since $F'(x)=\frac{1}{\sqrt{\pi}}e^{-x^2/4}$, then $F''(x)=-\frac{x}{2}F'(x)$ and the diagonal elements of this matrix can be written in the form
\begin{align*}
Q_{11}=F'(x)(\frac{x}{2}(F(x)-F(y))+F'(x))= \\ F'(x)(\frac{x}{2}(F(x)-F(y))+(F'(x)-F'(y)))+F'(x)F'(y), \\
Q_{22}=F'(y)(\frac{y}{2}(F(y)-F(x))+(F'(y)-F'(x)))+F'(x)F'(y).
\end{align*}
By Cauchy mean value theorem there exists such a value $z\in (y,x)$ that
\[
\frac{F'(x)-F'(y)}{F(x)-F(y)}=\frac{F''(z)}{F'(z)}=-z/2.
\]
Therefore,
\begin{align*}
Q_{11}=F'(x)(F(x)-F(y))(x-z)/2+F'(x)F'(y), \\ Q_{22}=F'(y)(F(x)-F(y))(z-y)/2+F'(x)F'(y),
\end{align*}
and it follows that $Q=R_1+F'(x)F'(y)R_2$, where $R_1$ is a diagonal matrix with the positive diagonal elements
$F'(x)(F(x)-F(y))(x-z)/2$, $F'(y)(F(x)-F(y))(z-y)/2$ while $R_2=\left(\begin{smallmatrix} 1 & -1 \\ -1 & 1\end{smallmatrix}\right)$. Since $R_1>0$, $R_2\ge 0$, then the matrix $Q>0$, as was to be proved.
\end{proof}

\begin{remark}\label{rem1}
In addition to Lemma~\ref{lem1} we observe that the functions $P(x,0)=-\ln F(x)$, $P(+\infty,x)=-\ln(1-F(x))$
of single variable $x$ are strictly convex on $(0,+\infty)$. In fact, it follows from Lemma~\ref{lem1} in the limit as $y\to 0$ that the function $P(x,0)$ is convex on $(0,+\infty)$, moreover,
\[
(F(x))^2\frac{d^2}{dx^2}P(x,0)=F'(x)(\frac{x}{2}F(x)+F'(x))=\lim_{y\to 0}Q_{11}\ge 0.
\]
Since $F'(x)>0$, we find, in particular, that $\frac{x}{2}F(x)+F'(x)\ge 0$.
If $\frac{d^2}{dx^2}P(x,0)=0$ at some point $x=x_0$ then $0=\frac{x_0}{2}F(x_0)+F'(x_0)$ is the minimum of the nonnegative function $\frac{x}{2}F(x)+F'(x)$. Therefore, its derivative $(\frac{x}{2}F+F')'(x_0)=0$. Since $F''(x)=-\frac{x}{2}F'(x)$, this derivative
\[
(\frac{x}{2}F+F')'(x_0)=F(x_0)/2+\frac{x_0}{2}F'(x_0)+F''(x_0)=F(x_0)/2>0.
\]
But this contradicts our assumption. We conclude that $\frac{d^2}{dx^2}P(x,0)>0$ and the function $P(x,0)$ is strictly convex.

The strong convexity of the function $P(+\infty,x)=-\ln(1-F(x))$ is proved similarly. For the sake of completeness, we
provide the details.
In the limit as $x<y\to+\infty$ we derive from Lemma~\ref{lem1} that the function $P(+\infty,x)=\lim\limits_{y\to+\infty}P(y,x)$ is convex on $\R$ and
\[(1-F(x))^2\frac{d^2}{dx^2}P(+\infty,x)=F'(x)(\frac{x}{2}(F(x)-1)+F'(x))\ge 0.\]
If $\displaystyle\frac{d^2}{dx^2}P(+\infty,x)=0$ at some point $x=x_0\in\R$ then $x_0$ is a minimum point of the nonnegative function
$\frac{x}{2}(F(x)-1)+F'(x)$. Therefore,
\[
0=(\frac{x}{2}(F(x)-1)+F'(x))'(x_0)=(F(x_0)-1)/2+F''(x_0)+F'(x_0)x_0/2=(F(x_0)-1)/2<0.
\]
This contradiction implies that $\displaystyle\frac{d^2}{dx^2}P(+\infty,x)>0$ for all $x\in\R$ and, therefore, the function $P(+\infty,x)$ is strictly convex (even on the whole line $\R$).
\end{remark}

Now we are ready to prove the expected convexity of $E_n(\bar\xi)$.

\begin{proposition}\label{th2}
The function $E_n(\bar\xi)$ is strictly convex on $\Omega_n$.
\end{proposition}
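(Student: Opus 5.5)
We will write $E_n$ as a sum of convex pieces, each a composition of a convex function from Lemma~\ref{lem1} or Remark~\ref{rem1} with a linear map. Then we will show that the sum is strictly convex by checking that its Hessian can vanish in no nonzero direction.

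The decomposition is as follows. The $i=0$ term is $k_0(u_1-u_0)P(+\infty,\xi_1/a_0)$, which by Remark~\ref{rem1} is strictly convex in $\xi_1$. For $1\le i\le n-1$ the terms are $k_i(u_{i+1}-u_i)P(\xi_i/a_i,\xi_{i+1}/a_i)$; each is the composition of $P$ with the linear map $\bar\xi\mapsto(\xi_i/a_i,\xi_{i+1}/a_i)$, which sends $\Omega_n$ into the half-plane $x>y$. By Lemma~\ref{lem1} these terms are convex on $\Omega_n$, and strictly convex in the pair $(\xi_i,\xi_{i+1})$. The term with $i=n$ is $k_n(u_D-u_n)P(\xi_n/a_n,0)$, strictly convex in $\xi_n$ by Remark~\ref{rem1}. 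Finally, the Stefan terms $d_i\xi_i^2/4$ with $d_i\ge0$ are convex. All coefficients $k_i(u_{i+1}-u_i)$ and $k_n(u_D-u_n)$ are positive, so $E_n$ is convex as a sum of convex functions.

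For strictness it suffices to show that $D^2E_n(\bar\xi)$ is positive definite at every point of $\Omega_n$. Take $h=(h_1,\dots,h_n)$ with $D^2E_n h\cdot h=0$. Every summand contributes a nonnegative quadratic form, so each contribution vanishes. The first term gives $c\,h_1^2=0$ with $c>0$, by the strict positivity of the second derivative established in Remark~\ref{rem1}; hence $h_1=0$. For each middle term the contribution is $(h_i/a_i,h_{i+1}/a_i)\,D^2P\,(h_i/a_i,h_{i+1}/a_i)^T=0$, and positive definiteness of $D^2P$ from Lemma~\ref{lem1} forces $h_i=h_{i+1}=0$. The last term independently gives $h_n=0$. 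Hence $h=0$ and the Hessian is positive definite.

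A positive definite Hessian on the open convex set $\Omega_n$ implies strict convexity, for instance by restricting to segments and using a positive second derivative. The case $n=1$ is covered directly by the first and last terms.

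The only step needing care is the bookkeeping for strictness. Mere convexity of each summand is not enough; one needs that the sum of the positive semidefinite forms has trivial joint kernel. That is why I use the full positive definiteness of $D^2P$ in both variables from Lemma~\ref{lem1}, which is strictly more than we need: the chain of middle terms together with the end terms already covers every coordinate.
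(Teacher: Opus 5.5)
Your proof is correct and follows essentially the same route as the paper. Both decompose $E_n$ into the convex pieces from Lemma~\ref{lem1} and Remark~\ref{rem1}, then show the Hessian has trivial kernel: each middle term $P_i$ forces $\zeta_i=\zeta_{i+1}=0$, and $P_0$ handles $n=1$. The only difference is that the paper runs the kernel argument on $\bar E_n=\sum_{i=0}^{n-1}P_i$ alone, without the boundary term $-k_n(u_D-u_n)\ln F(\xi_n/a_n)$. That way strict convexity holds on the larger set of decreasing vectors and can be reused for the Neumann potential on $\bar\Omega_n$; your redundant use of the end terms is harmless for the statement at hand.
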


\begin{proof}
We introduce the functions
\[P_i(\bar\xi)=-k_i(u_{i+1}-u_i)\ln (F(\xi_i/a_i)-F(\xi_{i+1}/a_i)), \quad i=0,\ldots,n-1.\]
By Lemma~\ref{lem1} and Remark~\ref{rem1} all these functions are convex.
Since
\[
E_n(\bar\xi)=\sum_{i=0}^{n-1} P_i(\bar\xi)-k_n(u_D-u_n)\ln F(\xi_n/a_n)+\sum_{i=1}^m d_i\xi_i^2/4
\]
and all functions in this sum are convex, it is sufficient to prove strong convexity of the sum
\[
\bar E_n(\bar\xi)=\sum_{i=0}^{n-1} P_i(\bar\xi).
\]
This function is defined in the wider domain consisting of vectors in $\R^n$ with decreasing coordinates (without the positivity requirement). We will show that $\bar E_n(\bar\xi)$ is strictly convex in this domain.

By Lemma~\ref{lem1} and Remark~\ref{rem1} all terms in this sum are convex functions. Therefore, the function $\bar E_n$ is convex as well. To prove its strict convexity, we assume that for some vector $\zeta=(\zeta_1,\ldots,\zeta_n)\in\R^n$.
\begin{equation}\label{deg}
D^2 \bar E_n(\bar\xi)\zeta\cdot\zeta=\sum_{i,j=1}^n \frac{\partial^2 \bar E_n(\bar\xi)}{\partial\xi_i\partial\xi_j}\zeta_i\zeta_j=0.
\end{equation}
Since
\[
0=D^2 \bar E_n(\bar\xi)\zeta\cdot\zeta=\sum_{i=0}^{n-1} D^2 P_i(\bar\xi)\zeta\cdot\zeta
\]
while all the terms are nonnegative, we conclude that
\begin{equation}\label{deg1}
D^2 P_i(\bar\xi)\zeta\cdot\zeta=0, \quad i=0,\ldots,n-1.
\end{equation}
By Lemma~\ref{lem1} for $i=1,\ldots,n-1$ the functions $P_i(\bar\xi)$ are strictly convex as a function of two variables $\xi_i,\xi_{i+1}$ and it follows from (\ref{deg1}) that $\zeta_i=\zeta_{i+1}=0$, $i=1,\ldots,n-1$. Observe that in the case $n=1$ there are no such $i$. In this case we apply (\ref{deg1}) for $i=0$. Taking into account Remark~\ref{rem1}, we find that $P_0(\bar\xi)$ is a strictly convex function of the single variable $\xi_1$, and it follows from (\ref{deg1}) that $\zeta_1=0$. In any case we obtain that the vector $\zeta=0$. Thus, relation (\ref{deg}) can hold only for zero $\zeta$, that is, the matrix $D^2\tilde E_n(\bar\xi)$ is (strictly) positive definite, and the function $\bar E_n(\bar\xi)$ is strictly convex. This completes the proof.
\end{proof}
Propositions~\ref{th1},\ref{th2} imply the main result of this section.

\begin{theorem}\label{th3}
There exists a unique self-similar solution (\ref{3}) of problem (\ref{1}), (\ref{St}), (\ref{2}), and it corresponds to the minimum of strictly convex and coercive function (\ref{5}).
\end{theorem}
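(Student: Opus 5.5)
The plan is to combine Theorem~\ref{th1a} (or Proposition~\ref{th1}) with Proposition~\ref{th2}. One point needs care first. A self-similar solution of the form (\ref{3}) need not a priori have the type $n$ fixed by $u_n<u_D\le u_{n+1}$. So we argue as follows. A solution of the form (\ref{3}) is continuous and decreasing, with $u(0)=u_D$ and $u(+\infty)=u_0$. Its range is therefore $(u_0,u_D]$, and it crosses the level $u_i$ exactly when $u_i<u_D$. Hence the number of free boundaries equals the $n$ determined by $u_n<u_D\le u_{n+1}$, and the free boundaries must satisfy $\xi_1>\cdots>\xi_n>0$, i.e. $\bar\xi\in\Omega_n$. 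If $u_D=u_{n+1}$, the level $u_{n+1}$ is reached only at $\xi=0$, which is the agreed convention $\xi_{n+1}=0$. So no extra free boundary arises.

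Next we record that the Stefan conditions (\ref{St}), rewritten in $\xi$, are exactly the system (\ref{4}). Differentiating (\ref{5}) term by term shows that (\ref{4}) is the system $\nabla E_n(\bar\xi)=0$. This is a routine computation. The derivative of $d_i\xi_i^2/4$ gives $d_i\xi_i/2$. The derivative of $-k_i(u_{i+1}-u_i)\ln(F(\xi_i/a_i)-F(\xi_{i+1}/a_i))$ in $\xi_i$ gives the third term of (\ref{4}) for index $i$. The derivative of the term with index $i-1$ in $\xi_i$ gives the middle-type term. The boundary terms with $\xi_0=+\infty$ and $-\ln F(\xi_n/a_n)$ are handled the same way. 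It follows that self-similar solutions (\ref{3}) correspond bijectively to the critical points of $E_n$ in $\Omega_n$.

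Existence then follows from Proposition~\ref{th1}. The sublevel sets are compact and nonempty for large $c$, so $E_n$ attains its minimum at some $\bar\xi_0\in\Omega_n$. Since $\Omega_n$ is open, $\nabla E_n(\bar\xi_0)=0$, which is exactly Theorem~\ref{th1a}.

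For uniqueness, suppose $\bar\xi_0,\bar\xi_1\in\Omega_n$ are two distinct critical points. Because $\Omega_n$ is convex, $g(t)=E_n(\bar\xi_0+t(\bar\xi_1-\bar\xi_0))$ is defined on $[0,1]$. By Proposition~\ref{th2} it is strictly convex, so $g'$ is strictly increasing. This contradicts $g'(0)=g'(1)=0$. Hence the critical point is unique and coincides with the global minimum.

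The only step requiring thought is the type identification in the first paragraph, which ensures that all solutions live in the same $\Omega_n$. It follows from monotonicity of (\ref{3}) and the convention $\xi_{n+1}=0$.
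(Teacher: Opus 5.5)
Your proposal is correct and follows the paper's route exactly: existence comes from the coercivity in Proposition~\ref{th1}, and uniqueness from the strict convexity in Proposition~\ref{th2}; your type-identification and gradient-check paragraphs simply make explicit what the paper takes for granted. There is one cosmetic slip in the gradient check: the $\xi_i$-derivative of the $i$-th logarithmic term (coefficient $k_i$) gives the middle term of (\ref{4}), and the $\xi_i$-derivative of the $(i-1)$-th term gives the last term, not the other way round.
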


\begin{corollary}\label{cor1}
The phase transition parameters $\xi_i$, $i=1,\ldots,n$, corresponding to a solution (\ref{3}) of problem (\ref{1}), (\ref{St}), (\ref{2}) depend continuously on the Dirichlet data $u_D$ in the interval $(u_n,u_{n+1}]$.
\end{corollary}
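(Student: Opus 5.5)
The plan is to treat $u_D$ as a parameter in the potential and to exploit the strict convexity and coercivity from Propositions~\ref{th1} and \ref{th2}. For fixed type $n$, write $E_n(\bar\xi;u_D)$ for the function (\ref{5}). The dependence on $u_D$ sits only in the term $-k_n(u_D-u_n)\ln F(\xi_n/a_n)$, which is affine in $u_D$. Hence $E_n$ is jointly continuous on $\Omega_n\times(u_n,u_{n+1}]$. Let $\bar\xi(u_D)$ be its unique minimum point, given by Theorem~\ref{th3}. We must show that $u_D^k\to u_D$ in $(u_n,u_{n+1}]$ implies $\bar\xi(u_D^k)\to\bar\xi(u_D)$.

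\textbf{Step 1: uniform compactness.} Fix a compact interval $J=[\alpha,\beta]\subset(u_n,u_{n+1}]$ containing the sequence and its limit, with $\alpha>u_n$.
\begin{itemize}
\item Pick any fixed point $\bar\eta\in\Omega_n$. The bound $\min E_n(\cdot;u_D)\le E_n(\bar\eta;u_D)\le c$ holds uniformly for $u_D\in J$, since $E_n(\bar\eta;\cdot)$ is affine.
\item Repeat the proof of Proposition~\ref{th1} with $u_D-u_n$ replaced by its lower bound $\alpha-u_n>0$. Because $-\ln F>0$, the estimate (\ref{co1b}) gives $\xi_n\ge a_nF^{-1}(e^{-c/(k_n(\alpha-u_n))})$.
\item The other bounds in that proof do not involve $u_D$.
\end{itemize}
So all minimizers $\bar\xi(u_D)$, $u_D\in J$, lie in one fixed compact set $K\subset\Omega_n$.

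\textbf{Step 2: limit argument.} Take any subsequence along which $\bar\xi(u_D^k)\to\bar\xi^*\in K$. For every $\bar\eta\in\Omega_n$ we have $E_n(\bar\xi(u_D^k);u_D^k)\le E_n(\bar\eta;u_D^k)$. Passing to the limit using joint continuity gives $E_n(\bar\xi^*;u_D)\le E_n(\bar\eta;u_D)$, so $\bar\xi^*$ is a minimizer. By uniqueness (strict convexity, Proposition~\ref{th2}), $\bar\xi^*=\bar\xi(u_D)$. Every subsequence has a further subsequence converging to the same limit, so the whole sequence converges.

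An alternative route is to apply the implicit function theorem to $\nabla_{\bar\xi}E_n=0$. This works because the Hessian is positive definite by Proposition~\ref{th2}, and it even gives smooth dependence. However, the compactness argument is cleaner and covers the endpoint $u_D=u_{n+1}$ without issue.

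The only delicate point is Step 1: the uniform lower bound on $\xi_n$. It degenerates as $u_D\to u_n$, which is why the claim is stated on $(u_n,u_{n+1}]$ and why compact subintervals with $\alpha>u_n$ are used.
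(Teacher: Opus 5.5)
Your proposal is correct and follows the paper's route. The paper's proof just asserts that the minimizer of the strictly convex potential $E_n$, which depends continuously on $u_D$, is continuous in $u_D$. Your argument supplies the justification the paper leaves implicit: a uniform lower bound on $\xi_n$ over compact subintervals of $(u_n,u_{n+1}]$, obtained from the estimates of Proposition~\ref{th1}, followed by a subsequence-and-uniqueness argument.
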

\begin{proof}
Since the function $E_n(\bar\xi)$ is strictly convex and continuous with respect to the parameter $u_D\in (u_n,u_{n+1}]$, its minimum point $\bar\xi=\bar\xi(u_D)$ is a continuous function of this parameter.
\end{proof}
As follows from Corollary~\ref{cor1} and representation (\ref{3}), the solution $u(\xi)$ of problem (\ref{1}), (\ref{St}), (\ref{2}) depends continuously (in the uniform norm) on $u_D\in (u_n,u_{n+1}]$ for all integer $n\ge 0$. Let us show that it keeps the continuity also at the points $u_n$, $n\in\N$.

\begin{proposition}\label{pro1}
The solution $u(\xi)$ of problem (\ref{1}), (\ref{St}), (\ref{2}) depends continuously (in the uniform norm) on the boundary data $u_D$ in the ray $(u_0,+\infty)$.
\end{proposition}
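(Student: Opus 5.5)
By Corollary~\ref{cor1} and representation (\ref{3}), continuity already holds on each interval $(u_n,u_{n+1}]$. What remains is continuity from the right at a point $u_D=u_n$, $n\in\N$. At such a point the solution has type $n-1$, while for $u_D\in(u_n,u_{n+1}]$ it has type $n$.

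The plan is as follows. Let $u_D\to u_n+$ and let $\bar\xi(u_D)=(\xi_1,\ldots,\xi_n)$ be the minimizer of $E_n$. We will show that $\xi_n\to 0$ and that $(\xi_1,\ldots,\xi_{n-1})\to\bar\eta$, the minimizer of $E_{n-1}$ with Dirichlet datum $u_n$. The uniform convergence of $u(\xi)$ then follows from (\ref{3}). On $[0,\xi_n]$ the solution lies between $u_n$ and $u_D$, so it is uniformly close to $u_n$. On the remaining layers the formulas involve $F(\xi_{n}/a_{n-1})\to F(0)=0$ and the convergent $\xi_i$. The profiles on $[\xi_{i+1},\xi_i)$ converge uniformly because $F$ is uniformly continuous and $F(\xi_i/a_i)-F(\xi_{i+1}/a_i)$ stays bounded below. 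The last layer converges to the formula of type $n-1$ on $[0,\eta_{n-1})$.

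To prove this convergence I would use the variational characterization together with compactness. First, the values $\min E_n$ are bounded as $u_D\to u_n+$. To see this, take a test point $(\eta_1,\ldots,\eta_{n-1},\varepsilon)$ with $\varepsilon$ fixed small. The term $-k_n(u_D-u_n)\ln F(\varepsilon/a_n)$ is small once $u_D-u_n$ is small relative to $|\ln F(\varepsilon/a_n)|$, and the remaining terms are close to $E_{n-1}(\bar\eta)$ plus $O(\varepsilon)$ contributions. Letting $\varepsilon\to0$ slowly then gives
\[\limsup_{u_D\to u_n+}\min E_n\le \min E_{n-1}.\]

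Next, the a priori bounds from the proof of Proposition~\ref{th1} that do not involve the last term remain uniform. These are $\xi_1\le r_2$ and $\xi_i-\xi_{i+1}\ge\delta_i$ for $i\le n-1$, and they hold with $c$ fixed. The lower bound on $\xi_n$ degenerates, which is exactly why $\xi_n$ may go to $0$. Take a subsequence along which $\bar\xi\to(\zeta_1,\ldots,\zeta_n)$ with $\zeta_1>\cdots>\zeta_{n-1}\ge\zeta_n\ge0$.

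I expect the main obstacle to be showing that $\zeta_n=0$. If instead $\zeta_n>0$, the limit point would be a critical point of the limiting functional. Letting $u_D\to u_n$ in the $n$-th equation of (\ref{4}), the term with factor $(u_D-u_n)$ vanishes, since $F(\xi_n/a_n)$ is bounded below when $\zeta_n>0$. What remains is
\[d_n\zeta_n/2-\frac{k_{n-1}(u_n-u_{n-1})F'(\zeta_n/a_{n-1})}{a_{n-1}(F(\zeta_n/a_{n-1})-F(\zeta_{n-1}/a_{n-1}))}=0.\]
Both terms are strictly positive, because the denominator is negative. This is a contradiction, so $\zeta_n=0$. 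The case $d_n=0$ is still fine, since the second term alone is positive.

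With $\zeta_n=0$, lower semicontinuity gives $E_{n-1}(\zeta_1,\ldots,\zeta_{n-1})\le\liminf\min E_n$. Here the dropped terms are nonnegative, and the term for $i=n-1$ converges to $-k_{n-1}(u_n-u_{n-1})\ln(F(\zeta_{n-1}/a_{n-1})-F(0))$, which is the last term of $E_{n-1}$ with $u_D=u_n$. Combined with the $\limsup$ estimate, this shows that $(\zeta_1,\ldots,\zeta_{n-1})$ minimizes $E_{n-1}$. By the uniqueness in Theorem~\ref{th3} it equals $\bar\eta$, so the whole family converges, which concludes the argument.
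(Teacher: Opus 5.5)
Your proposal is correct and follows essentially the same route as the paper. Both arguments use uniform bounds from the coercivity estimates of Proposition~\ref{th1}, with the lower bound on $\xi_n$ replaced by $0$. Both exclude $\zeta_n>0$ by the strict positivity of the limiting $n$-th Stefan relation, and both identify the limit through the uniqueness in Theorem~\ref{th3}.

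The differences are minor. The paper bounds $\min E_n$ by monotonicity of the potentials in $u_D$, whereas you use a test point. The paper identifies $(\zeta_1,\ldots,\zeta_{n-1})$ by passing to the limit in the first $n-1$ Stefan relations, whereas you use a $\liminf$/$\limsup$ sandwich showing that it minimizes $E_{n-1}$.

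One small correction: you wrote $\zeta_{n-1}\ge\zeta_n$, but your bound $\xi_{n-1}-\xi_n\ge\delta_{n-1}$ actually gives $\zeta_{n-1}>\zeta_n$. You need this strict inequality, since otherwise the denominator in the limiting relation could vanish.
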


\begin{proof}
As was mentioned above, the solution $u(\xi)$ is continuous in $u_D$ on each interval $(u_n,u_{n+1}]$. Hence, we only need to prove the right continuity of the solution at points $u_n$, $n\in\N$. Let $u_{Dr}\in (u_n,u_{n+1})$, $r\in\N$, be a decreasing sequence converging as $r\to\infty$ to $u_n$, $u^r(\xi)$ be a solution to problem (\ref{1}), (\ref{St}), (\ref{2}) with the Dirichlet data $u_D=u_{Dr}$, $\bar\xi^r=(\xi^r_1,\ldots,\xi^r_n)\in\Omega_n$ be the corresponding vector of phase transition parameters. By Theorem~\ref{th3} $E^r(\bar\xi^r)=\min E^r(\bar\xi)$, where the potentials
\[E^r(\bar\xi)=-\sum_{i=0}^{n-1} k_i(u_{i+1}-u_i)\ln (F(\xi_i/a_i)-F(\xi_{i+1}/a_i))-\nonumber\\  k_n(u_{Dr}-u_n)\ln F(\xi_n/a_n)+\sum_{i=1}^n d_i\xi_i^2/4\] decrease with respect to $r$. This implies that the sequence $E^r(\bar\xi^r)$ decreases as well and therefore bounded from above: $E^r(\bar\xi^r)\le c$ $\forall r\in\N$ where $c$ is some positive constant. Following the prove of Proposition~\ref{th1}, we find that
\[\bar\xi^r\in K=\{ \ \bar\xi=(\xi_1,\ldots,\xi_n)\in\R^n \ | \ r_2\ge\xi_1\ge\cdots\ge\xi_n\ge 0, \ \xi_i-\xi_{i+1}\ge\delta_i \ \forall i=1,\ldots,n-1 \ \},
\]
where $r_2$ and $\delta_i$ are positive constants independent of $r$, The low bounds $r_1=a_nF^{-1}(e^{-c/(k_n(u_{Dr}-u_n))})\to 0$ as $r\to\infty$ so we have to replace it by $0$. Since the set $K$ is compact, we can extract a subsequence $\bar\xi^r$ (without relabeling) converging to some $\bar\xi^*=(\xi^*_1,\ldots,\xi^*_n)\in K$.
Then the sequence $u^r(\xi)$ uniformly on $[0,+\infty)$ converges as $r\to\infty$ to a function
\[
u^*(\xi)=\left\{\begin{array}{lr} u_i+\frac{u_{i+1}-u_i}{F(\xi^*_{i+1}/a_i)-F(\xi^*_i/a_i)}(F(\xi/a_i)-F(\xi^*_i/a_i)), &
\xi^*_{i+1}\le\xi<\xi^*_i, \\ & i=0,\ldots,n-1; \\
u_n, & 0\le\xi<\xi^*_n.
\end{array}\right.
\]
Passing to the limit as $r\to\infty$ in the Stefan relations (\ref{4}) for the solutions $u^r$, we find that these relations hold at the transition points $\xi=\xi^*_i$, $i=1,\ldots,n-1$, and at the point $\xi=\xi^*_n$ whenever
$\xi^*_n>0$. But in the latter case $\xi^*_n>0$ relation (\ref{4}) turn to the equality
\[
d_n\xi^*_n/2+
\frac{k_{n-1}(u_n-u_{n-1})F'(\xi^*_n/a_{n-1})}{a_{n-1}(F(\xi^*_{n-1}/a_{n-1})-F(\xi^*_n/a_{n-1}))}=0,
\]
which is impossible because its left side is positive. We conclude that $\xi^*_n=0$ and $u^*(\xi)$ is a solution
to problem of (\ref{1}), (\ref{St}), (\ref{2}) of type $n-1$ with the boundary data $u_D=u_n$. In particular, the limit of
$u^r(\xi)$ does not depend on a choice of convergent subsequence. This means that the original sequence converges to $u^*(\xi)$ as $r\to\infty$ uniformly on $[0,+\infty)$. This completes the proof.
\end{proof}

Let us demonstrate that dependence of $u(\xi)$ on the Dirichlet data is monotone.

\begin{proposition}\label{cor2}
Let $u_r=u_r(\xi)$ be solutions (\ref{3}) of problem (\ref{1}), (\ref{St}), (\ref{2}) with Dirichlet data $u_{Dr}$, $r=1,2$. If $u_{D1}>u_{D2}$ then $u_1(\xi)>u_2(\xi)$ for all $\xi>0$.
\end{proposition}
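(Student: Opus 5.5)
We argue by a comparison (maximum-principle type) argument for the profiles, exploiting the Stefan conditions in the form of the degenerate equation (\ref{diff}). In the variable $\xi$ the self-similar solution satisfies the ODE
\[
K(u)''+\frac{\xi}{2}G(u)'=0
\]
in the distributional sense on $(0,+\infty)$. The jump conditions (\ref{4}) are exactly the statement that the flux $w=K(u)'$ satisfies $w'=-\frac{\xi}{2}G(u)'$, where $G(u)'$ contains the Dirac masses $d_i\,\delta_{\xi_i}$ times the (negative) jump direction. Set $v_r=K(u_r)$. Since $K$ is strictly increasing, it suffices to prove $v_1>v_2$ on $(0,+\infty)$. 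We have $v_1(0)=K(u_{D1})>K(u_{D2})=v_2(0)$ and $v_1(+\infty)=v_2(+\infty)=K(u_0)$.

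First we show $v_1\ge v_2$. Suppose the open set $\{v_1<v_2\}$ is nonempty, and let $(\alpha,\beta)$ be one of its components. Then $\alpha>0$, $v_1=v_2$ at $\alpha$, and at $\beta$ we have either $v_1=v_2$ or $\beta=+\infty$. On this interval $u_1<u_2$.

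Integrate the ODE with the multiplier $e^{\,\cdot}$-type weight. An easier route is the classical lap/comparison argument: write $h=G(u)$ and use that $\xi\mapsto -K(u)'(\xi)$ satisfies $(K(u)')'=-\frac{\xi}{2}(G(u))'$. Integrating over $(\alpha,\beta)$ gives
\[
(v_1-v_2)'\big|_\alpha^\beta=-\int_\alpha^\beta\frac{\xi}{2}\,d\bigl(G(u_1)-G(u_2)\bigr),
\]
and integrating by parts the right-hand side becomes
\[
-\frac{\xi}{2}\bigl(G(u_1)-G(u_2)\bigr)\Big|_\alpha^\beta+\frac12\int_\alpha^\beta\bigl(G(u_1)-G(u_2)\bigr)\,d\xi .
\]
Here care is needed at endpoints where $u_1=u_2=u_i$ is a transition temperature, because $G$ jumps there. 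We handle this by using one-sided values $G(u_i\pm)$ consistently with the monotonicity of the profiles, or we simply choose the endpoints as limits from inside the interval.

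On $(\alpha,\beta)$ we have $G(u_1)<G(u_2)$, so the integral is negative. The boundary terms vanish, or have a favourable sign, since $u_1=u_2$ at finite endpoints and $\xi(G(u)-G(u_0))\to0$ at infinity thanks to the Gaussian decay. On the left-hand side, $(v_1-v_2)'(\alpha+)\le0$ and $(v_1-v_2)'(\beta-)\ge0$, so the left side is $\ge0$. If the correct sign bookkeeping yields a strict contradiction, then $v_1\ge v_2$, i.e.\ $u_1\ge u_2$.

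Strictness then follows. If $u_1(\xi_*)=u_2(\xi_*)$ at some $\xi_*>0$, then $\xi_*$ is a touching point with $u_1-u_2\ge0$ nearby. If $u_1(\xi_*)$ is not a transition temperature, uniqueness for the second-order linear ODE forces the derivatives to agree and hence $u_1\equiv u_2$ on a phase interval. Propagating this through the Stefan conditions (\ref{4}), solved sequentially as in \cite{Wil}, gives $u_1\equiv u_2$, contradicting $u_{D1}\ne u_{D2}$. If $u_1(\xi_*)=u_i$, both profiles pass the free boundary at the same $\xi_*=\xi_i$. The ordering gives $u_{1x}(\xi_i+)\ge u_{2x}(\xi_i+)$ and $u_{1x}(\xi_i-)\le u_{2x}(\xi_i-)$, in terms of magnitudes with the correct signs. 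Combined with (\ref{4}), which has the same $d_i\xi_i/2$ term for both, this forces equality of the one-sided derivatives, and we again reduce to ODE uniqueness.

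The main obstacle is the integral identity step: making the sign argument rigorous across free boundaries where $G$ is discontinuous and the latent-heat Dirac masses appear, in particular near endpoints of $(\alpha,\beta)$ that coincide with transition points. As a fallback, we would use continuity in $u_D$ (Proposition~\ref{pro1}) together with a continuity/first-touching argument along the family $u_D\in[u_{D2},u_{D1}]$.
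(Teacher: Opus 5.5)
Your argument is correct, and it takes a genuinely different route from the paper's.

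\textbf{The paper's route.} The paper never proves a weak comparison first. It takes the smallest $\xi_*$ with $u_1(\xi_*)\le u_2(\xi_*)$ and, if needed, inserts a fictive phase with zero latent heat so that $u_*=u_1(\xi_*)$ becomes a transition temperature. It then uses strict convexity of the reduced potential in $(\xi_1,\dots,\xi_{i-1})$, with $\xi_i=\xi_*$ frozen, to show that both profiles coincide on $[\xi_*,+\infty)$. The Stefan condition then equalizes the left derivatives, and ODE uniqueness contradicts the minimality of $\xi_*$.

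\textbf{Your route.} You prove the weak comparison $u_1\ge u_2$ by an integrated divergence-form identity, and then get strictness from a touching argument. The step you flagged as the main obstacle does close, so the fallback is unnecessary. Three facts make it work.
\begin{itemize}
\item The Stefan jump $k_{i-1}u'(\xi_i+)-k_iu'(\xi_i-)=d_i\xi_i/2$ is exactly the atom of $-\frac{\xi}{2}\,dG(u)$ at $\xi_i$. Hence $d\bigl(K(u_r)'\bigr)=-\frac{\xi}{2}\,dG(u_r)$ holds as measures.
\item Both profiles are strictly decreasing and agree at a finite endpoint. So $\lim_{\xi\downarrow\alpha}G(u_r(\xi))=G(u_r(\alpha)-)$ and $\lim_{\xi\uparrow\beta}G(u_r(\xi))=G(u_r(\beta)+)$ are the same for $r=1,2$, and the boundary terms vanish exactly.
\item For $\beta=+\infty$, the explicit phase-$0$ form gives $w'\to0$, $\xi g\to0$ and $g\in L^1$.
\end{itemize}
Thus, with $w=K(u_1)-K(u_2)$ and $g=G(u_1)-G(u_2)$, you get $0\le w'(\beta-)-w'(\alpha+)=\frac12\int_\alpha^\beta g\,d\xi<0$, which is the contradiction.

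\textbf{Two small corrections in the strictness step.} First, in the non-transition case it is the interior minimum of $u_1-u_2$ that equalizes the derivatives; ODE uniqueness only then gives coincidence. Second, no appeal to Wilson is needed for the propagation. Going leftwards, the Stefan condition at each common transition point $\eta$ determines $u_r'(\eta-)$ from $u_r'(\eta+)$. So coincidence continues down to $\xi=0$ and forces $u_{D1}=u_{D2}$.

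\textbf{What each approach buys.} The paper's proof is short given its convexity machinery, but it depends on that machinery and on the fictive-phase trick. Yours is a self-contained maximum principle that uses only the equation and the Stefan conditions. It is valid for any two decreasing piecewise-smooth self-similar profiles. Moreover, the term $\alpha\,g(\alpha+)$ also vanishes when $\alpha=0$, so your argument even yields uniqueness for the Dirichlet problem independently of Proposition~\ref{th2}.
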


\begin{proof}

Assuming the contrary, we can find such a value $\xi>0$ where $u_1(\xi)\le u_2(\xi)$. By the continuity of $u_1,u_2$ and the condition $u_1(0)>u_2(0)$, there exists the minimal value $\xi_*$ among such $\xi$.
It is clear that $u_1(\xi_*)=u_2(\xi_*)\doteq u_*$. Let $n_r=\max\{i \ | \ u_i<u_{Dr}\}$ be the type of solution $u_r$, $r=1,2$. We can find such an integer $i$ between $0$ and $n_2$ that $u_i\le u_*<u_{i+1}$. If $u_*\not=u_i$ we may add $u_*$ as a new (fictive) phase transition temperature, assigning it  the number $i+1/2$ and the Stefan constant $d_{i+1/2}=0$. The $i$-th phase is thus divided into two identical phases with numbers $i$, $i+1/2$ and equal parameters $a_{i+1/2}=a_i$, $k_{i+1/2}=k_i$. Both the solution $u_r(\xi)$, $r=1,2$, are also solutions of the extended Stefan-Dirichlet problem, since the Stefan condition (\ref{St}) at the fictive phase-transition line $\xi=\xi_{i+1/2}=\xi_*$ reduces to $C^1$-smoothness of $u_r$ at the point $\xi_*$.
Thus, without loss of generality, we may initially suppose that $u_*=u_i$. It is clear that $i\le n_2\le n_1$. Let $\bar\xi=(\xi_1,\ldots,\xi_{n_1})\in\Omega_{n_1}$, $\bar\xi'=(\xi'_1,\ldots,\xi'_{n_2})\in\Omega_{n_2}$ be phase transition parameters corresponding to the solutions $u_1(\xi)$, $u_2(\xi)$.
 We introduce the function
\begin{align*} E_1(\bar\xi_1)=-\sum_{l=0}^{i-1} k_l(u_{l+1}-u_l)\ln (F(\xi_l/a_l)-F(\xi_{l+1}/a_l))+\sum_{l=1}^{i-1} d_l\xi_l^2/4, \\
\mbox{ where } \xi_i=\xi_*, \ \bar\xi_1=(\xi_1,\ldots,\xi_{i-1})\in\Omega_1,
\end{align*}
$\Omega_1$ is an open convex set in the space $\R^{i-1}$ defined by the inequalities $\xi_1>\cdots >\xi_{i-1}>\xi_*$. We agree that $\Omega_1=\emptyset$ if $i=1$. By Theorem~\ref{th3}, the points $\bar\xi$, $\bar\xi'$ are minimal points of the (different) potentials $E_{n_r}$. This implies that the reduced points
$(\xi_1,\ldots,\xi_{i-1})$, $(\xi'_1,\ldots,\xi'_{i-1})$ are the minimum points of the function $E_1(\bar\xi_1)$
whenever $i>1$. Like in Proposition~\ref{th2} it is proved that the function $E_1(\bar\xi_1)$ is strictly convex on $\Omega_1$ and therefore its minimum point is unique. Taking also into account that $\xi_i=\xi'_i=\xi_*$, we find that $\xi_l=\xi'_l$ for all $l=1,\ldots,i$. Remark that in the case $i=1$ this statement is evident. By expression (\ref{3}) we conclude that $u_1(\xi)=u_2(\xi)$ for
$\xi\ge\xi_i$. Therefore, $u_1'(\xi_i+)=u_2'(\xi_i+)$. It now follows from Stefan condition (\ref{St}) that $u_1'(\xi_i-)=u_2'(\xi_i-)\doteq u'_*$. We see that functions $u_1(\xi)$, $u_2(\xi)$ are solutions of the same Cauchy problem for the second order ODE
\[a_iu''=-\xi u'/2, \quad u(\xi_*)=u_*, u'(\xi_*)=u'_*
\]
on a sufficiently small interval $(a,\xi_*]$, $0<a<\xi_*$. Therefore, $u_1(\xi)=u_2(\xi)$ on this interval. But this contradicts to the minimality of $\xi_*$, and completes the proof.
\end{proof}

\begin{remark}\label{rem2} In paper \cite{Pan2} Stefan problem (\ref{1}), (\ref{St}) was studied in the half-plane $t>0$, $x\in\R$ with Riemann initial condition
\begin{equation}
\label{R}
u(0,x)=\left\{\begin{array}{lr} u_+, & x>0, \\ u_-, & x<0. \end{array}\right.
\end{equation}
Solutions of this problem have the same structure as in (\ref{3}) and correspond to a unique minimum point of the function similar to (\ref{5}) with only the difference that the parameters $\xi_i$ are not necessarily positive and can take arbitrary real values. Theorem~\ref{th3} was also proven in paper \cite{Pan3}, where the case $u_1=u_0$ was included.

 In recent paper \cite{Pan4} the Stefan problem with Dirichlet and Neumann conditions at moving boundary $x=\alpha t$ is studied. For finite total number of phase transitions
our results concerning Dirichlet and Neumann problems follows from the results of \cite{Pan4} in the particular case $\alpha=0$. In \cite{Pan4} we study also the exotic case of solutions of infinite type using natural infinite-dimensional extension of the potentials $E_n$.

Remark also that in paper \cite{Pan1} the Stefan-Riemann problem (\ref{1}), (\ref{St}), (\ref{R}) was studied in the case of arbitrary (possibly negative) latent specific heats $d_i$. We found a necessary and sufficient condition for coercivity of the potential $E_n(\bar\xi)$, as well as a stronger sufficient condition of its strict convexity. The similar results can be obtained for the Stefan-Dirichlet problem (\ref{1}), (\ref{St}), (\ref{2}).
\end{remark}

\section{Stefan problem with Neumann boundary condition}

Now we consider Stefan problem (\ref{1}), (\ref{St})
with the constant initial data $u_0$ and with Neumann boundary condition:
\begin{equation}\label{Neu}
u(0,x)=u_0 \ \forall x>0, \quad K(u)_x(t,0)=t^{-1/2} b_N \ \forall t>0,
\end{equation}
where $K(u)$ is the diffusion function in equation (\ref{diff}) and $b_N<0$ is a constant. The specific form of Neumann boundary data is connected with the requirement of invariance of our problem under the scaling transformations $(t,x)\to (\lambda^2 t,\lambda x)$, $\lambda>0$. This allows to concentrate on the study of self-similar solutions $u=u(x/\sqrt{t})$ of the problem (\ref{1}), (\ref{St}), (\ref{Neu}).
For such solutions conditions (\ref{Neu}) reduce to the requirements
\begin{equation}\label{Neu1}
K(u)'(0)=b_N, \quad u(+\infty)=u_0.
\end{equation}
Since $K(u)$ is a strictly increasing function and $b_N<0$, we will assume that the function $u(\xi)$ decreases.
Let $u=u(\xi)$ be a decreasing self-similar solution of (\ref{1}), (\ref{St}), (\ref{Neu}). Then $u(0)>u_0$ and there is an integer $n\ge 0$ such that $u_n<u(0)\le u_{n+1}$. This number $n$ is the type of solution $u$. The Neumann condition for this solution reads $k_nu'(0)=b_N$
(notice that $k_nu_x$ is exactly the heat flow through the boundary point $x=0$).
Unlike Dirichlet boundary data, we cannot find the solution type directly from the boundary data. This greatly complicates the study.
A solution of type $0$ does not contain free boundaries and can be found by the formula
\begin{equation}\label{t0}
u(\xi)=u_0+\frac{a_0}{k_0}b_N\sqrt{\pi}(F(\xi/a_0)-1).
\end{equation}
As is easy to verify, $k_0u'(0)=b_N$, $u(+\infty)=u_0$ and requirement (\ref{Neu1}) is satisfied. By easy computation we find $u(0)=u_0-a_0b_N\sqrt{\pi}/k_0$, therefore, the necessary and sufficient condition for existence of a solution (\ref{t0}) is the
inequality $u_0-a_0b_N\sqrt{\pi}/k_0\le u_1$, which can be written in the form
\begin{equation}\label{con0}
-b_N\le \gamma_1\doteq\frac{k_0(u_1-u_0)}{a_0\sqrt{\pi}}.
\end{equation}
A solution of type $n>0$ has structure similar to (\ref{3})
\begin{align}\label{tki}
u(\xi)=u_i+\frac{u_{i+1}-u_i}{F(\xi_{i+1}/a_i)-F(\xi_i/a_i)}(F(\xi/a_i)-F(\xi_i/a_i)), \quad
\xi_{i+1}\le\xi<\xi_i, \ i=0,\ldots,n-1, \\
\label{tkn}
u(\xi)=u_n+\frac{a_n}{k_n}b_N\sqrt{\pi}(F(\xi/a_n)-F(\xi_n/a_n)), \quad 0\le\xi<\xi_n.
\end{align}
The necessary (but not sufficient, as we will soon realize) condition $u(0)\le u_{n+1}$ of existence of such a solution has the form
\begin{equation}\label{conn}
u(0)=\Phi^{ND}_n(-b_N)\doteq u_n-\frac{a_nb_N\sqrt{\pi}}{k_n} F(\xi_n/a_n)\le u_{n+1}.
\end{equation}
In (\ref{conn}) we introduce the Neumann-to-Dirichlet mapping, which maps a Neumann data $-b_N$ to the Dirichlet data $u(0)$. We underline that the value $\xi_n$ depends on $-b_N$.

Assume that $u(\xi)$ is a solution (\ref{tki}), (\ref{tkn}) of type $n$.
On a phase transition lines $\xi=\xi_i$, the Stefan condition reads
\begin{align}\label{sysi}
d_i\xi_i/2+k_i\frac{(u_{i+1}-u_i)F'(\xi_i/a_i)}{a_i(F(\xi_{i+1}/a_i)-F(\xi_i/a_i))}- \nonumber\\
k_{i-1}\frac{(u_i-u_{i-1})F'(\xi_i/a_{i-1})}{a_{i-1}(F(\xi_i/a_{i-1})-F(\xi_{i-1}/a_{i-1}))}=0, \quad i=1,\ldots,n-1, \\
\label{sysn}
d_n\xi_n/2+b_N\sqrt{\pi} F'(\xi_n/a_n)- k_{n-1}\frac{(u_n-u_{n-1})F'(\xi_n/a_{n-1})}{a_{n-1}(F(\xi_n/a_{n-1})-F(\xi_{n-1}/a_{n-1}))}=0, \quad i=n.
\end{align}
Like in the case of Dirichlet boundary condition, this system turns out to be gradient one, it coincides with the equality $\nabla\tilde E_n=0$, where the function
\begin{align}\label{E}
\tilde E_n(\bar\xi)=-\sum_{i=0}^{n-1}k_i(u_{i+1}-u_i)\ln (F(\xi_i/a_i)-F(\xi_{i+1}/a_i))\nonumber\\
+a_nb_N\sqrt{\pi}F(\xi_n/a_n)+\frac{1}{4}\sum_{i=1}^n d_i\xi_i^2, \quad \bar\xi=(\xi_1,\ldots,\xi_n)\in\Omega_n.
\end{align}
We recall that $\Omega_n$ is an open convex cone in $\R^n$ consisting of vectors with strictly decreasing positive coordinates.
Remark that $F''(s)=-s/2F'(s)<0$ for all $s>0$. Since $b_N<0$, this implies that the term $a_nb_N\sqrt{\pi}F(\xi_n/a_n)$ is a strictly convex function of single variable $\xi_n$ on the interval $[0,+\infty)$. As we demonstrated in the proof of Proposition~\ref{th2}, the function
\[
\bar E_n(\bar\xi)=-\sum_{i=0}^{n-1}k_i(u_{i+1}-u_i)\ln (F(\xi_i/a_i)-F(\xi_{i+1}/a_i))
\]
is strictly convex on a cone
\[\bar\Omega_n=\{ \ \bar\xi=(\xi_1,\ldots,\xi_n)\in\R^n \ | \ \xi_1>\cdots>\xi_n\ge 0 \ \}\supset\Omega_n,\]
consisting of points with strictly decreasing nonnegative coordinates. Since
\[\tilde E_n(\bar\xi)=\bar E_n(\bar\xi)+a_nb_N\sqrt{\pi}F(\xi_n/a_n)+\frac{1}{4}\sum_{i=1}^n d_i\xi_i^2,\] the function $\tilde E_n(\bar\xi)$ is strictly convex on $\bar\Omega_n$ as well.
Let us demonstrate that this function is coercive on $\bar\Omega_n$.

\begin{proposition}\label{th4}
For all $c\in\R$ the set $\bar\Omega_n(c)=\{ \ \bar\xi\in\bar\Omega_n \ | \ \tilde E_n(\bar\xi)\le c \ \}$ is compact.
\end{proposition}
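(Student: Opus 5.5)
The plan is to imitate the proof of Proposition~\ref{th1}, with one change. The term $a_nb_N\sqrt{\pi}F(\xi_n/a_n)$ is no longer a logarithm that blows up as $\xi_n\to0$. It is only bounded, since $b_N<0$ and $0\le F<1$ give $a_nb_N\sqrt{\pi}\le a_nb_N\sqrt{\pi}F(\xi_n/a_n)\le 0$. This is exactly why the domain is enlarged to $\bar\Omega_n$, which allows $\xi_n=0$.

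First I record the pointwise bounds. Every logarithmic term $-k_i(u_{i+1}-u_i)\ln(F(\xi_i/a_i)-F(\xi_{i+1}/a_i))$ is nonnegative on $\bar\Omega_n$, because the difference of the values of $F$ lies in $(0,1]$; for $i=0$ it equals $1-F(\xi_1/a_0)\in(0,1)$. The quadratic terms $d_i\xi_i^2/4$ are also nonnegative. Hence, if $\bar\xi\in\bar\Omega_n(c)$, each logarithmic term is at most
\[
c'\doteq c-a_nb_N\sqrt{\pi}=c+a_n|b_N|\sqrt{\pi}.
\]
If $c'\le0$ the set $\bar\Omega_n(c)$ is empty, since $\tilde E_n>-a_n|b_N|\sqrt{\pi}$, and there is nothing to prove.

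Next I bound the coordinates. The $i=0$ term gives $1-F(\xi_1/a_0)\ge e^{-c'/(k_0(u_1-u_0))}$, hence
\[
\xi_1\le r_2\doteq a_0F^{-1}\bigl(1-e^{-c'/(k_0(u_1-u_0))}\bigr).
\]
For $i=1,\ldots,n-1$ the Lipschitz argument of Proposition~\ref{th1} gives $\xi_i-\xi_{i+1}\ge\delta_i=a_i\exp(-c'/(k_i(u_{i+1}-u_i)))>0$. The lower bound $\xi_n\ge0$ is built into $\bar\Omega_n$. Thus $\bar\Omega_n(c)$ lies in the compact set
\[
K=\{\ \bar\xi\in\R^n \ | \ r_2\ge\xi_1\ge\cdots\ge\xi_n\ge0, \ \xi_i-\xi_{i+1}\ge\delta_i\ \}.
\]
The point to check is that $K\subset\bar\Omega_n$: the separations $\delta_i$ make the coordinates strictly decreasing, and $\xi_1\le r_2<\infty$ is all that $i=0$ requires.

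Finally, $\tilde E_n$ is continuous on $\bar\Omega_n$, including the face $\xi_n=0$, since $F(0)=0$ and $F(\xi_{n-1}/a_{n-1})-F(0)>0$ there. So $\bar\Omega_n(c)=K\cap\{\tilde E_n\le c\}$ is closed in $K$, hence compact.

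I expect the main (mild) obstacle to be the case $n=1$. There are no separation conditions, and one only needs $0\le\xi_1\le r_2$ together with continuity at $\xi_1=0$, where $-\ln(1-F(0))=0$ is finite. The other point needing care is that the bound $c'$ absorbs the negative Neumann term uniformly, which follows from $F<1$.
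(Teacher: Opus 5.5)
Your proof is correct and follows essentially the same route as the paper's: absorb the bounded Neumann term into the constant $c-a_nb_N\sqrt{\pi}$, use nonnegativity of the remaining terms to bound $\xi_1$ from above and the gaps $\xi_i-\xi_{i+1}$ from below, and conclude that $\bar\Omega_n(c)$ is a closed subset of a compact set $K$. Your explicit check that $K\subset\bar\Omega_n$, which is needed for $\tilde E_n$ to be continuous on $K$, is a small point the paper leaves implicit.
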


\begin{proof}
Suppose that $\bar\xi\in\bar\Omega_n$, $E_n(\bar\xi)\le c$. Then
\begin{align*}
-\sum_{i=0}^{n-1}k_i(u_{i+1}-u_i)\ln (F(\xi_i/a_i)-F(\xi_{i+1}/a_i))+ \frac{1}{4}\sum_{i=1}^n d_i\xi_i^2= \\
\tilde E(\bar\xi)-a_nb_N\sqrt{\pi}F(\xi_n/a_n)\le c_1\doteq c-a_nb_N\sqrt{\pi}.
\end{align*}
Since all the terms of the left-hand side of this inequality are nonnegative, we obtain the relations
\begin{equation}\label{co2a}
-k_i(u_{i+1}-u_i)\ln (F(\xi_i/a_i)-F(\xi_{i+1}/a_i))\le c_1, \ i=0,\ldots,n-1, \\
\end{equation}
the same as inequalities (\ref{co1a}). As follows from (\ref{co2a}),  the set $\bar\Omega_n(c)=\emptyset$ if $c_1<0$. Therefore, we may (and will) suppose that $c_1\ge 0$. Arguing as in the proof of Proposition~\ref{th1}, we derive from (\ref{co2a}) the bounds
\begin{align*}
\xi_1\le r_2=a_0F^{-1}(1-e^{-c_1/(k_0(u_1-u_0))}), \\
(\xi_i-\xi_{i+1})/a_i\ge \alpha_i=\exp(-c_1/(k_i(u_{i+1}-u_i)))>0, \quad i=1,\ldots,n-1.
\end{align*}
Thus, the set $\bar\Omega_n(c)$ is contained in a compact
\[
K=\{ \ \bar\xi=(\xi_1,\ldots,\xi_n)\in\R^n \ | \ r_2\ge\xi_1\ge\cdots\ge\xi_n\ge 0, \ \xi_i-\xi_{i+1}\ge\alpha_i a_i \ \forall i=1,\ldots,n-1 \ \}.
\]
Since $\tilde E_n(\bar\xi)$ is continuous on $K$, the set $\bar\Omega_n(c)$ is a closed subset of $K$ and therefore is compact. This completes the proof.
\end{proof}

It follows from Proposition~\ref{th4} and the strict convexity of function $\tilde E_n$ that there exists a point
$\bar\xi^n=(\xi_1^n,\ldots,\xi_n^n)\in\bar\Omega_n$ of global minimum of $\tilde E_n$, and it is a unique local minimum of this function. There are two possible cases:

A) $\bar\xi^n\in\Omega_n$, i.e. $\xi_n^n>0$. If, in addition, condition (\ref{conn}) is satisfied then there exists a unique solution (\ref{tki}), (\ref{tkn}) of type $n$
with $\xi_i=\xi_i^n$, $i=1,\ldots,n$;

B) $\bar\xi^n\notin\Omega_n$, i.e. $\xi_n^n=0$. Then a solution of type $n$ does not exist. Let us investigate this case more precisely. The necessary and sufficient conditions for the point $\bar\xi^n=(\xi_1^n,\ldots,\xi_{n-1}^n,0)$ to be a minimum point of $\tilde E_n(\bar\xi)$ are the following
\begin{align}
\label{mc1}
\frac{\partial}{\partial\xi_i} \tilde E_n(\bar\xi^n)=0, \ i=1,\ldots,n-1, \\
\label{mc2}
\frac{\partial}{\partial\xi_n} \tilde E_n(\bar\xi^n)\ge 0,
\end{align}
where condition (\ref{mc1}) appears only if $n>1$. Notice that for such $n$
\begin{align*}
\tilde E_n(\xi_1,\ldots,\xi_{n-1},0)=-\sum_{i=0}^{n-1}k_i(u_{i+1}-u_i)\ln (F(\xi_i/a_i)-F(\xi_{i+1}/a_i))
+\frac{1}{4}\sum_{i=1}^{n-1} d_i\xi_i^2, \\ \xi_n=0, \ (\xi_1,\ldots,\xi_{n-1})\in\Omega_{n-1}.
\end{align*}
We see that $\tilde E_n(\xi_1,\ldots,\xi_{n-1},0)$ coincides with the potential $E_{n-1}(\xi_1,\ldots,\xi_{n-1})$, corresponding to
Stefan-Dirichlet problem (\ref{1}), (\ref{St}), (\ref{2}) with $u_D=u_n$.
Relation (\ref{mc1}) means that $\nabla E_{n-1}(\xi_1^n,\ldots,\xi_{n-1}^n)=0$, that is, $(\xi_1^n,\ldots,\xi_{n-1}^n)\in\Omega_{n-1}$ is a unique minimal point of $E_{n-1}(\xi_1,\ldots,\xi_{n-1})$. According to Theorem~\ref{th3}, the coordinates $\xi_i^n$, $i=1,\ldots,n-1$, coincide with the phase transition parameters $\xi_i$ of the unique solution (\ref{3}) of problem (\ref{1}), (\ref{St}), (\ref{2}) with $u_D=u_n$ (in particular, they do not depend on the Neumann data $b_N$ and on parameters $a_i$, $k_i$, $d_i$ with $i\ge n$). As is easy to calculate, condition (\ref{mc2}) reads
\begin{equation}\label{cB}
\frac{k_{n-1}(u_n-u_{n-1})}{\sqrt{\pi}a_{n-1}F(\xi_{n-1}^n/a_{n-1})}+b_N\ge 0.
\end{equation}
This formula remains valid also for $n=1$, in this case one have to take $\xi_{n-1}^n=\xi_0^1=+\infty$, so that
$F(\xi_{n-1}^n/a_{n-1})=F(+\infty)=1$. Under requirement (\ref{cB}) the case B) is realised so that a solution of type $n$ does not exist.

We introduce the Dirichlet-to-Neumann mapping $\Phi^{DN}_n$, which maps a value $r\in (u_n,u_{n+1}]$ to a value $-k_nu'(0)$, where $u=u^r(\xi)$ is a solution of the Stefan-Dirichlet problem (\ref{1}), (\ref{St}), (\ref{2}) with  $u_D=r$. This mapping is inverse to the Neumann-to-Dirichlet mapping $\Phi^{ND}_n$ introduced in (\ref{conn}) above. It follows from (\ref{3}) that
\begin{equation}\label{dnn}
\Phi^{DN}_n(r)=\frac{k_n(r-u_n)}{a_n\sqrt{\pi}F(\xi_n/a_n)},
\end{equation}
where $\xi_n=\xi_n(r)$ depends on the Dirichlet data $r=u(0)$. Notice that ${F(\xi_0/a_0)=F(+\infty)=1}$ and
\begin{equation}\label{dn0}\Phi^{DN}_0(r)=\frac{k_0(r-u_0)}{a_0\sqrt{\pi}}, \quad u_0<r\le u_1,\end{equation}
is a linear function. By the construction $u^r(\xi)$ is a solution of type $n$ to the Stefan-Neumann problem (\ref{1}), (\ref{St}), (\ref{Neu}) with Neumann data
$b_N=-\Phi^{DN}_n(r)$. Therefore, condition (\ref{cB}) cannot hold, and we claim that
\[\Phi^{DN}_n(r)>\gamma_n\doteq\frac{k_{n-1}(u_n-u_{n-1})}{\sqrt{\pi}a_{n-1}F(\xi_{n-1}^n/a_{n-1})}\]
for all $r\in (u_n,u_{n+1}]$. Remark that $\xi^1_0=+\infty$ and the constant $\gamma_1$ coincides with one introduced earlier in (\ref{con0}).

\begin{lemma}\label{lem2}
The function $\Phi^{DN}_n(r)$ is strictly increasing continuous function on $(u_n,u_{n+1}]$ such that
\[
\Phi^{DN}_n(u_n+)=\lim_{r\to u_n+}\Phi^{DN}_n(r)=\gamma_n, \quad \Phi^{DN}_n(u_{n+1})=\gamma_{n+1}.
\]
In particular, $\gamma_{n+1}>\gamma_n$ for all $n\in\Z_+$. Here we agree that $\gamma_0=0$.
\end{lemma}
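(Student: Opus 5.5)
The plan is to prove four things in turn: continuity, the two endpoint values, injectivity, and finally strict monotonicity, which will be deduced from injectivity together with the inequality $\Phi^{DN}_n(r)>\gamma_n$ already observed before the lemma. The case $n=0$ is immediate from (\ref{dn0}), since $\Phi^{DN}_0$ is linear with positive slope, $\Phi^{DN}_0(u_0+)=0=\gamma_0$ and $\Phi^{DN}_0(u_1)=\gamma_1$ by (\ref{con0}). So I will assume $n\ge1$.

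\emph{Continuity.} By Corollary~\ref{cor1}, $\xi_n=\xi_n(r)$ depends continuously on $r\in(u_n,u_{n+1}]$. Hence the right-hand side of (\ref{dnn}) is continuous there, because its denominator $F(\xi_n/a_n)$ is positive.

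\emph{Right endpoint.} At $r=u_{n+1}$ the Dirichlet solution of type $n$ has parameters $\xi_1,\ldots,\xi_n$. By Theorem~\ref{th3} these coincide with $\xi^{n+1}_1,\ldots,\xi^{n+1}_n$, since both are the minimum point of the same potential $E_n$ with $u_D=u_{n+1}$. Substituting into (\ref{dnn}) gives exactly
\[
\Phi^{DN}_n(u_{n+1})=\frac{k_n(u_{n+1}-u_n)}{\sqrt{\pi}\,a_nF(\xi_n^{n+1}/a_n)}=\gamma_{n+1}.
\]

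\emph{Left endpoint.} This is the delicate step, because (\ref{dnn}) has the form $0/0$ as $r\to u_n+$. I will not use (\ref{dnn}) directly but rewrite the flux through the Stefan condition (\ref{4}) at $i=n$ (with $u_{n+1}$ replaced by $r$ and $\xi_{n+1}=0$). Using $F(0)=0$, this gives
\[
\Phi^{DN}_n(r)=\frac{1}{\sqrt{\pi}F'(\xi_n/a_n)}\Big(\frac{d_n\xi_n}{2}+\frac{k_{n-1}(u_n-u_{n-1})F'(\xi_n/a_{n-1})}{a_{n-1}\big(F(\xi_{n-1}/a_{n-1})-F(\xi_n/a_{n-1})\big)}\Big).
\]
The proof of Proposition~\ref{pro1} shows that for any sequence $r\to u_n+$ a subsequence of $\bar\xi(r)$ converges to $(\xi^*_1,\ldots,\xi^*_{n-1},0)$, where $(\xi^*_i)$ are the parameters of the Dirichlet solution with $u_D=u_n$. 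By Theorem~\ref{th3} and the discussion of case B), these are $\xi^n_i$. The limit is independent of the subsequence, so $\xi_n(r)\to0$ and $\xi_{n-1}(r)\to\xi^n_{n-1}$. Since $\sqrt{\pi}F'(0)=1$, the displayed expression tends to
\[
\frac{k_{n-1}(u_n-u_{n-1})}{a_{n-1}F(\xi^n_{n-1}/a_{n-1})}\cdot\frac{1}{\sqrt{\pi}}=\gamma_n .
\]
For $n=1$ one uses $\xi_0=+\infty$.

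\emph{Monotonicity.} First I show that $\Phi^{DN}_n$ is injective. Suppose $\Phi^{DN}_n(r_1)=\Phi^{DN}_n(r_2)=-b_N$. Then both Dirichlet solutions are type-$n$ solutions of the Neumann problem with the same $b_N$. Their parameter vectors are therefore critical points in $\Omega_n$ of the same function $\tilde E_n$, which is strictly convex, so the vectors coincide. Then (\ref{conn}) gives $r_1=r_2$. A continuous injective function on an interval is strictly monotone. Since $\Phi^{DN}_n(r)>\gamma_n=\Phi^{DN}_n(u_n+)$ for every $r$ (the consequence of (\ref{cB}) failing, noted before the lemma), the function cannot be decreasing, so it is strictly increasing. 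In particular $\gamma_{n+1}=\Phi^{DN}_n(u_{n+1})>\gamma_n$.

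I expect the left-endpoint limit to be the main obstacle, specifically justifying the convergence of $\xi_{n-1}(r)$ and $\xi_n(r)$; this is handled by reusing the compactness argument from Proposition~\ref{pro1}.
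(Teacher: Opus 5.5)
Your proof is correct. For continuity, the right endpoint and monotonicity it matches the paper: your injectivity argument is the paper's appeal to uniqueness of the type-$n$ Neumann solution, made explicit through strict convexity of $\tilde E_n$ and (\ref{conn}).

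The genuine difference is in the left-endpoint limit.

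\textbf{The paper's route (Neumann side).} It first proves strict monotonicity. It then uses the continuous dependence of the minimizer $\bar\xi^n(s)$ of $\tilde E_n$ on $s=-b_N$, which it justifies only by a one-line appeal to strict convexity. Together with $\xi^n_n(\gamma_n)=0$ and $\xi^n_n(s)>0$ for $s>\gamma_n$, this gives $r_h=\Phi^{ND}_n(\gamma_n+h)\to u_n$ while $\Phi^{DN}_n(r_h)=\gamma_n+h$. Monotonicity then forces $\Phi^{DN}_n(u_n+)=\gamma_n$.

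\textbf{Your route (Dirichlet side).} You remove the $0/0$ form by expressing the boundary flux through the $n$-th Stefan condition. You then pass to the limit using the convergence $\bar\xi(r)\to(\xi^n_1,\ldots,\xi^n_{n-1},0)$, which is already contained in the proof of Proposition~\ref{pro1}.

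\textbf{What yours buys.} It computes the limit directly and needs no monotonicity beforehand. The direction of monotonicity then follows cleanly from $\Phi^{DN}_n>\gamma_n=\Phi^{DN}_n(u_n+)$. It avoids the continuous dependence of the Neumann minimizer on $b_N$. It also shows that $\gamma_n$ is the boundary flux $-k_{n-1}u'(0+)$ of the Dirichlet solution with $u_D=u_n$. In the limit, the latent-heat term $d_n\xi_n/2$ vanishes and the flux passes continuously from the type-$n$ to the type-$(n-1)$ solution.

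\textbf{What the paper's buys.} It stays inside the variational framework of $\tilde E_n$. It identifies $\gamma_n$ as the threshold at which the Neumann minimizer leaves the face $\xi_n=0$. It also gives continuity of $\Phi^{ND}_n$ as a by-product.
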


\begin{proof}
By Corollary~\ref{cor1} the function $\xi_n=\xi_n(r)$ is continuous. In view of (\ref{dnn}) we see that $\Phi^{DN}_n(r)$ is a continuous function on $(u_n,u_{n+1}]$. If this function is not strictly monotone then we can find values $r_1,r_2$ such that $u_n<r_1<r_2<u_{n+1}$ and that $\Phi^{DN}_n(r_1)=\Phi^{DN}_n(r_2)$. Then the functions $u^{r_1}(\xi)$, $u^{r_2}(\xi)$ are different self-similar solutions of type $n$ to Stefan-Neumann problem (\ref{1}), (\ref{St}), (\ref{Neu}) with the same Neumann data $b_N=-\Phi^{DN}_n(r_1)=-\Phi^{DN}_n(r_2)$, which contradicts to the uniqueness of this solution. Thus, $\Phi^{DN}_n(r)$ is a strictly monotone function. Further,
\[\Phi^{DN}_n(u_{n+1})=\frac{k_n(u_{n+1}-u_n)}{a_n\sqrt{\pi}F(\xi^{n+1}_n/a_n)}=\gamma_{n+1}.\]
We notice that by the strict convexity of function (\ref{E}) its minimum point $\bar\xi^n$ depends continuously on the parameter $s=-b_N$. In  particular, the last coordinate $\xi_n^n=\xi_n^n(s)$ is a continuous function of $s$, and the introduced in (\ref{conn}) Neumann-to-Dirichlet map $\Phi^{ND}_n$ is a continuous function. As follows from (\ref{cB}),   $\xi_n^n(\gamma_n)=0$ while $\xi_n^n(s)>0$ for $s>\gamma_n$. Therefore, for sufficiently small $h>0$ the value $$r_h=\Phi^{ND}_n(\gamma_n+h)\in (u_n,u_{n+1}]$$ and $r_h\to u_n$ as $h\to 0$. Since $\Phi^{DN}_n(r_h)=\gamma_n+h$ and the function $\Phi^{DN}_n(r)$ is monotone, we conclude that $\Phi^{DN}_n(u_n+)=\gamma_n$. In the case $n=0$ the equality ${\Phi^{DN}_0(u_0+)=\gamma_0=0}$ directly follows from expression (\ref{dn0}). Remind that $\Phi^{DN}_n(r)>\gamma_n$ for $r>u_n$. In particular,
\[\gamma_{n+1}=\Phi^{DN}_n(u_{n+1})>\gamma_n,\] and the function $\Phi^{DN}_n(r)$ strictly increases.
\end{proof}

It follows from Lemma~\ref{lem2} that the Dirichlet-to-Neumann map
\begin{equation}\label{mapDN}
\Phi^{DN}(r)=\Phi^{DN}_n(r) \ \mbox{ if } u_n<r\le u_{n+1}, \ n\in\Z_+,
\end{equation}
is a strictly increasing continuous function on $(u_0,+\infty)$ with the image $(0,\gamma_\infty)$, where
$\gamma_\infty=\sup_{n\in\N}\gamma_n\le +\infty$. This, together with Theorem~\ref{th3}, implies
the following main results on correctness of problem (\ref{1}), (\ref{St}), (\ref{Neu}).

\begin{theorem}\label{th5} For any Neumann data $-b_N\in (0,\gamma_\infty)$ there exists a unique solution of the Stefan-Neumann problem
(\ref{1}), (\ref{St}), (\ref{Neu}). The type $n$ of this solution is determined by the condition $\gamma_n<-b_N\le\gamma_{n+1}$.
\end{theorem}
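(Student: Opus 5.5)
The proof reduces Theorem~\ref{th5} to the properties of the Dirichlet-to-Neumann map $\Phi^{DN}$ from (\ref{mapDN}) and Lemma~\ref{lem2}. The key observation is the following equivalence. A decreasing self-similar function $u(\xi)$ is a solution of the Stefan-Neumann problem (\ref{1}), (\ref{St}), (\ref{Neu}) with data $b_N$ if and only if it solves the Stefan-Dirichlet problem (\ref{1}), (\ref{St}), (\ref{2}) with $u_D=u(0)$ and, in addition, $-k_nu'(0)=-b_N$, where $n$ is the type of $u$. In other words, $\Phi^{DN}(u(0))=-b_N$. This holds because both problems share the same representation (\ref{3}) (resp.\ (\ref{tki}), (\ref{tkn})) and the same Stefan conditions; only the prescribed boundary quantity differs.

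For existence, fix $-b_N\in(0,\gamma_\infty)$. By Lemma~\ref{lem2}, $\Phi^{DN}$ is a strictly increasing continuous function on $(u_0,+\infty)$. Its pieces glue continuously because $\Phi^{DN}_n(u_{n+1})=\gamma_{n+1}=\Phi^{DN}_{n+1}(u_{n+1}+)$, and $\Phi^{DN}_0(u_0+)=0$. Since $\gamma_n\to\gamma_\infty$ with $\gamma_n$ increasing, the image of $\Phi^{DN}$ is exactly $(0,\gamma_\infty)$. Hence there is a unique $r\in(u_0,+\infty)$ with $\Phi^{DN}(r)=-b_N$. Let $n$ be the index with $u_n<r\le u_{n+1}$. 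By Theorem~\ref{th3}, the Stefan-Dirichlet problem with $u_D=r$ has a solution $u^r$ of type $n$, and by the definition of $\Phi^{DN}_n$ it satisfies $k_nu^{r\prime}(0)=b_N$. This gives the required Neumann solution.

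For uniqueness, let $u$ be any decreasing self-similar solution of the Neumann problem with data $b_N$, and put $r'=u(0)$. Then $u$ solves the Dirichlet problem with $u_D=r'$. By the uniqueness part of Theorem~\ref{th3}, $u=u^{r'}$, so $\Phi^{DN}(r')=-b_N=\Phi^{DN}(r)$. Strict monotonicity of $\Phi^{DN}$ forces $r'=r$, hence $u=u^r$.

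For the type, note that $\Phi^{DN}$ is strictly increasing and maps $(u_n,u_{n+1}]$ onto $(\gamma_n,\gamma_{n+1}]$ by Lemma~\ref{lem2}. Therefore $u_n<r\le u_{n+1}$ is equivalent to $\gamma_n<-b_N\le\gamma_{n+1}$.

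The main point needing care is that the image of $\Phi^{DN}_n$ on $(u_n,u_{n+1}]$ is the whole interval $(\gamma_n,\gamma_{n+1}]$. This follows from continuity, monotonicity and the endpoint values in Lemma~\ref{lem2}. A second point is confirming that the union of these intervals over $n\in\Z_+$ is $(0,\gamma_\infty)$, which holds since $\gamma_0=0$ and $\gamma_n\uparrow\gamma_\infty$.
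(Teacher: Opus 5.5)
Your proposal is correct and follows essentially the paper's own route. The paper likewise deduces Theorem~\ref{th5} by combining Lemma~\ref{lem2} with the existence and uniqueness of Theorem~\ref{th3}: Lemma~\ref{lem2} makes $\Phi^{DN}$ in (\ref{mapDN}) a strictly increasing continuous bijection of $(u_0,+\infty)$ onto $(0,\gamma_\infty)$. You simply make explicit what the paper leaves implicit, namely the gluing at the points $u_{n+1}$, the identification of Neumann solutions with Dirichlet solutions satisfying $\Phi^{DN}(u(0))=-b_N$, and the equivalence $u_n<r\le u_{n+1}\iff\gamma_n<-b_N\le\gamma_{n+1}$.
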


If $u(\xi)$ is a solution of the Stefan-Neumann problem then it has finite type $n$ and, in view of Lemma~\ref{lem2}, $-k_nu'(0)\le\gamma_{n+1}<\gamma_\infty$. Thus, the condition $-b_N<\gamma_\infty$ is necessary for existence of a solution. In the case of finite total number of phase transitions the solution exists for each $-b_N>0$ (cf. \cite{Pan4}) and we may take $\gamma_\infty=+\infty$. But in the case of infinitely many phase transitions it is possible that
$\gamma_\infty$ is finite. Let us confirm this by the following example.

\begin{example}
We consider the Stefan problem with the parameters
$u_i=i$, $a_i\equiv 0$, $k_i=2^{-i}$, $i=\Z_+$; $d_i\equiv 0$, $i\in\N$. We set $\alpha_i=F^{-1}(2^{-i})$ (notice that $\alpha_0=+\infty$), and introduce the continuous piecewise smooth function
\[
v(\xi)=i+2-2^{i+1} F(\xi), \quad \alpha_{i+1}\le \xi<\alpha_i, \ i=0,1,\cdots,
\]
so that $v(\alpha_i)=u_i$.
Since $k_iv'(\alpha_i-0)=k_{i-1}v'(\alpha_i+0)=-2F'(\alpha_i)$ and $d_i=0$, Stefan conditions (\ref{St}) hold on the phase transition lines $\xi=\alpha_i$. Therefore, $v(\xi)$ is a decreasing self-similar solution to the Stefan problem under consideration. Notice that the function $K(v)(\xi)=2-2F(\xi)$, therefore, $0<-K(v)'(\xi)=2F'(\xi)<c\doteq 2/\sqrt{\pi}$ for all $\xi>0$. Now, let $u(\xi)$ be a solution of
Stefan-Dirichlet problem (\ref{1}), (\ref{St}), (\ref{2}) with the described parameters and with $u_D=u_n$. For $0<\xi<\alpha_n$ we have $u(\xi)<u_n$, $v(\xi)>u_n$ and in particular $u(\xi)<v(\xi)$. Let us demonstrate that $K(u)'(\xi)>K(v)'(\xi)$ for all $\xi\in (0,\alpha_n)$ (as follows from the assumption $d_i\equiv 0$, the function $K(u)$ is $C^1$-smooth). Assuming the contrary, we find such $\alpha\in (0,\alpha_n)$ that $K(u)'(\alpha)\le K(v)'(\alpha)$. But in view of \cite[Lemma 2]{Pan4}
the map $u(\alpha)\to -K(u)'(\alpha)$, where $u(\xi)\in C([\alpha,+\infty)$ is a self-similar solution to Stefan problem, is strictly increasing. Hence, the condition $u(\alpha)<v(\alpha)$ implies that $K(u)'(\alpha)>K(v)'(\alpha)$ and we came into contradiction. Thus, $-K(u)'(\xi)\le -K(v)'(\xi)<c$ for all $\xi\in (0,\alpha_n)$. It follows that $\gamma_n=-K(u)'(0+)\le c$.
Since $n\in\N$ is arbitrary, we conclude that $\gamma_\infty=\sup\gamma_n \le c<+\infty$.

\end{example}

\section{Stefan problem with Robin boundary condition}\label{sec3}

Now, we consider the Stefan problem with the Robin boundary condition at the fixed boundary $x=0$:
\begin{equation}\label{Rob}
u(0,x)=u_0; \quad -\sqrt{t}K(u)_x(t,0)+\beta(u(t,0)-u_0)=b_R,
\end{equation}
where $\beta\not=0$ (if $\beta=0$ the problem reduces to already studied Stefan-Neumann problem), $b_R>0$.
The problem (\ref{1}), (\ref{St}), (\ref{Rob}) is again invariant under the scaling transformations $(t,x)\to (\lambda^2 t,\lambda x)$ and we concentrate on self-similar solutions $u(t,x)=u(\xi)$, $\xi=x/\sqrt{t}$. Then, conditions (\ref{Rob})
turn to the following requirements
\begin{equation}\label{Robs}
u(+\infty)=u_0; \quad -K(u)'(0)+\beta(u(0)-u_0)=b_R.
\end{equation}

\subsection{The case of positive connection coefficient}\label{sec3pos}

First, we consider the case when the connection coefficient $\beta>0$. Then the Robin condition in (\ref{Robs}) reduces
to the equality $\Phi^{DR}(u(0))=b_R$, where the function $\Phi^{DR}(r)=\Phi^{DN}(r)+\beta(r-u_0)$ is defined on the ray
$r\in (u_0,+\infty)$. As was demonstrated in the previous section, the Dirichlet-to-Neumann function $\Phi^{DN}(r)$, defined in (\ref{mapDN}), is strictly increasing and continuous with the image $(0,\gamma_\infty)$. Since $\beta>0$, we derive that the function $\Phi^{DR}(r)$ is strictly increasing homeomorphism of $(u_0,+\infty)$ onto $(0,+\infty)$. This observation and Theorem~~\ref{th3} allow to conclude that our Stefan-Robin problem is well-posed:

\begin{theorem}\label{th6} For any Robin data $b_R>0$ there exists a unique solution of problem
(\ref{1}), (\ref{St}), (\ref{Rob}). The type $n$ of this solution is defined from the condition \[\gamma_n+\beta(u_n-u_0)<b_R\le\gamma_{n+1}+\beta(u_{n+1}-u_0).\]
\end{theorem}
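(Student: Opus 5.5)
The plan is to reduce the Robin problem to the Dirichlet problem via the map $\Phi^{DR}(r)=\Phi^{DN}(r)+\beta(r-u_0)$ and use its monotonicity. First we check the reduction precisely. Any decreasing self-similar solution $u(\xi)$ of (\ref{1}), (\ref{St}), (\ref{Rob}) has $r=u(0)>u_0$: otherwise $u\equiv u_0$, which gives $-K(u)'(0)+\beta(u(0)-u_0)=0\neq b_R$. It therefore has some finite type $n$ with $u_n<r\le u_{n+1}$. Such a $u$ is also a solution of the Stefan-Dirichlet problem (\ref{1}), (\ref{St}), (\ref{2}) with $u_D=r$. By Theorem~\ref{th3} this solution is unique, so $u=u^r$. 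Its boundary flux is $-K(u)'(0)=-k_nu'(0)=\Phi^{DN}_n(r)=\Phi^{DN}(r)$. Hence the Robin condition in (\ref{Robs}) is exactly $\Phi^{DR}(r)=b_R$. Conversely, for every root $r>u_0$ of this equation, $u^r$ solves the Robin problem.

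Next we establish the properties of $\Phi^{DR}$. By Lemma~\ref{lem2} and (\ref{mapDN}), $\Phi^{DN}$ is continuous and strictly increasing on $(u_0,+\infty)$, with $\Phi^{DN}(u_0+)=\gamma_0=0$. Adding the strictly increasing linear term $\beta(r-u_0)$, $\beta>0$, keeps $\Phi^{DR}$ continuous and strictly increasing. Its limit at $u_0+$ is $0$. As $r\to+\infty$ we have $\Phi^{DR}(r)\ge\beta(r-u_0)\to+\infty$, because $\Phi^{DN}>0$. Here we must avoid relying on $\gamma_\infty<\infty$, since $\gamma_\infty$ may be infinite; the lower bound by the linear term handles both cases. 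By the intermediate value theorem, $\Phi^{DR}$ is a homeomorphism of $(u_0,+\infty)$ onto $(0,+\infty)$. So for each $b_R>0$ there is exactly one $r$ with $\Phi^{DR}(r)=b_R$, and by the reduction above this gives existence and uniqueness.

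Finally, the type. The solution has type $n$ iff $r\in(u_n,u_{n+1}]$. Since $\Phi^{DR}$ is strictly increasing and continuous, this holds iff $\Phi^{DR}(u_n+)<b_R\le\Phi^{DR}(u_{n+1})$. Lemma~\ref{lem2} gives $\Phi^{DN}(u_n+)=\gamma_n$ and $\Phi^{DN}(u_{n+1})=\gamma_{n+1}$, and the linear term is continuous. So these bounds are $\gamma_n+\beta(u_n-u_0)$ and $\gamma_{n+1}+\beta(u_{n+1}-u_0)$, as stated.

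The main point to handle with care is the first step: showing that every solution of the Robin problem is of the self-similar form with finite type and coincides with the Dirichlet solution $u^r$, so that uniqueness transfers. This follows from Theorem~\ref{th3} once $u(0)>u_0$ is established. Everything else is monotone and continuous bookkeeping.
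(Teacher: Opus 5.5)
Your proposal is correct and takes the same route as the paper. The paper rewrites the Robin condition as $\Phi^{DR}(u(0))=b_R$ with $\Phi^{DR}(r)=\Phi^{DN}(r)+\beta(r-u_0)$, uses Lemma~\ref{lem2} to see that $\Phi^{DR}$ is a strictly increasing homeomorphism of $(u_0,+\infty)$ onto $(0,+\infty)$, and invokes Theorem~\ref{th3}. You also spell out points the paper leaves implicit: that $u(0)>u_0$, that surjectivity follows from $\Phi^{DR}(r)\ge\beta(r-u_0)$ whether or not $\gamma_\infty$ is finite, and that the type condition comes from $\Phi^{DN}(u_n+)=\gamma_n$ and $\Phi^{DN}(u_{n+1})=\gamma_{n+1}$.
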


\subsection{The case of negative connection coefficient}\label{sec3neg}

In the case $\beta<0$ the monotonicity of the map $\Phi^{DR}(r)$ may be violated and the Stefan-Robin problem is generally ill-posed. Nevertheless, we are going to analyse this problem using the variational approach developed in the previous sections. We will assume that the type $n$ of a solution is fixed. It is convenient to rewrite the Robin condition in the form
\begin{equation}\label{Robn}
k_n u'(0)+\beta(u(0)-u_R)=0,
\end{equation}
where now $\beta>0$. Since a solution $u(\xi)$ decreases, it follows from (\ref{Robn}) that $u(0)>u_R$. We suppose that $u_R>u_n$, then the necessary condition $u(0)>u_n$ is satisfied. By simple calculations we find that a solution of type $n$ has the form
\begin{align}\label{stri}
u(\xi)=u_i+\frac{u_{i+1}-u_i}{F(\xi_{i+1}/a_i)-F(\xi_i/a_i)}(F(\xi/a_i)-F(\xi_i/a_i)), \quad
\xi_{i+1}\le\xi<\xi_i, \ i=0,\ldots,n-1, \\
\label{strn}
u(\xi)=u_n+\frac{u_R-u_n}{F_n-F(\xi_n/a_n)}(F(\xi/a_n)-F(\xi_n/a_n)), \quad 0\le\xi<\xi_n,
\end{align}
where $F_n=k_n/(a_n\beta\sqrt{\pi})>0$. Here, as above, $\xi_0=\infty$, and $\xi_i$, $i=1,\ldots,n$, are phase transition parameters, so that $u(\xi_i)=u_i$. Since $u(\xi)$ decreases, then by (\ref{strn}) we see that $F(\xi_n/a_n)>F_n$. In particular, we have to assume that $F_n<1$. Taking $\xi^*_n=a_nF^{-1}(F_n)>0$, we can write $F_n=F(\xi^*_n/a_n)$.

The Stefan conditions at the lines $\xi=\xi_i$, $i=1,\ldots,n$, for the solution (\ref{stri}), (\ref{strn}) form the algebraic system with the same first $n-1$ equations as in (\ref{sysi}) but with the last equation replaced by the following one
\[
d_n\xi_n/2+\frac{k_n(u_R-u_n)F'(\xi_n/a_n)}{a_n(F_n-F(\xi_n/a_n))}- k_{n-1}\frac{(u_n-u_{n-1})F'(\xi_n/a_{n-1})}{a_{n-1}(F(\xi_n/a_{n-1})-F(\xi_{n-1}/a_{n-1}))}=0.
\]
This system is again gradient, it coincides with the equality $\nabla E=0$, where the function
\begin{align*}
E(\bar\xi)=-\sum_{i=0}^{n-1}k_i(u_{i+1}-u_i)\ln (F(\xi_i/a_i)-F(\xi_{i+1}/a_i))\nonumber\\
-k_n(u_R-u_n)\ln (F(\xi_n/a_n)-F_n)+\frac{1}{4}\sum_{i=1}^n d_i\xi_i^2, \quad \bar\xi=(\xi_1,\ldots,\xi_n)\in\Omega,
\end{align*}
where
\[\Omega=\{ \ \bar\xi\in\R^n \ | \ \xi_1>\cdots>\xi_n>\xi_n^* \ \}.\]
In the same way as for the potential $E_n$ (see Propositions~\ref{th1},~\ref{th2}), we establish that the function $E(\xi)$
is coercive and strictly convex in $\Omega$. Here we also take into account that the term $-k_n(u_R-u_n)\ln (F(\xi_n/a_n)-F_n)=-k_n(u_R-u_n)\ln (F(\xi_n/a_n)-F(\xi^*_n/a_n))$ is strictly convex in $\xi_n>\xi_n^*$ by Lemma~\ref{lem1}. Hence, the function $E(\bar\xi)$  has a unique critical point, the point of its global minimum.
This implies uniqueness of the solution and its existence whenever the requirement $u(0)\le u_{n+1}$ is satisfied. By the property of monotone dependence of a solution on $u(0)$ (see Proposition~\ref{cor2}) this requirement is equivalent to the inequality $\xi_n\le\xi_n^{n+1}$, where $\xi_n$ is the last coordinates of minimum point $\bar\xi$ of the potential $E(\bar\xi)$ while $\xi_n^{n+1}$ is the last coordinate of the point $\bar\xi^{n+1}\in\Omega_n$ (corresponding to the solution of the Stefan-Dirichlet problem with $u(0)=u_{n+1}$). In particular, $\xi^*_n<\xi_n^{n+1}$, which is equivalent to the inequality
\begin{equation}\label{nc}
k_n/(a_n\beta\sqrt{\pi})=F_n<F(\xi_n^{n+1}/a_n),
\end{equation}
being necessary for the existence. Let us show that for small $u_R-u_n$ this inequality is also sufficient.

\begin{theorem}\label{th7}
Assume that condition (\ref{nc}) is satisfied. Then there exists a positive $\varepsilon$ such that for all $u_R\in (u_n,u_n+\varepsilon)$ the problem (\ref{1}), (\ref{St}), (\ref{Robn}) has a unique solution of type~$n$.
\end{theorem}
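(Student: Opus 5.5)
The argument has three steps: uniqueness is already available, existence reduces to the single inequality $u(0)\le u_{n+1}$, and we show this inequality holds when $u_R$ is close to $u_n$ by a limit argument on minimizers of $E$.

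\emph{Reduction.} By the discussion preceding the theorem, for every $u_R>u_n$ the potential $E=E^{u_R}$ is strictly convex and coercive on $\Omega$. Hence it has a unique minimum point $\bar\xi(u_R)=(\xi_1,\ldots,\xi_n)$, and this point gives the only candidate solution of type $n$. The candidate is an actual solution precisely when $u(0)\le u_{n+1}$. As explained there, via Proposition~\ref{cor2}, this is equivalent to $\xi_n(u_R)\le\xi_n^{n+1}$. It therefore suffices to prove
\[\limsup_{u_R\to u_n+}\xi_n(u_R)<\xi_n^{n+1}.\]

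\emph{Limit of the minimizers.} Take a sequence $u_R^r\downarrow u_n$. The potentials $E^r$ decrease in $r$, so the minimal values $E^r(\bar\xi^r)$ are bounded above by a constant $c$. We then repeat the compactness argument from the proof of Proposition~\ref{pro1}. The terms with $i\le n-1$ give an upper bound $\xi_1\le r_2$ and lower bounds $\xi_i-\xi_{i+1}\ge\delta_i$, all uniform in $r$. For the last coordinate only $\xi_n\ge\xi_n^*$ survives, because the coefficient $k_n(u_R-u_n)$ tends to $0$. Passing to a subsequence, $\bar\xi^r\to\bar\xi^*$ with $\xi^*_1>\cdots>\xi^*_{n-1}>\xi^*_n\ge\xi_n^*$.

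We first rule out $\xi^*_n>\xi_n^*$. In that case $F(\xi_n^r/a_n)-F_n$ stays bounded away from zero, so the middle term of the last Stefan equation tends to $0$. The limit equation is then
\[d_n\xi^*_n/2=k_{n-1}(\cdots)<0,\]
whose two sides have opposite signs, a contradiction exactly as in Proposition~\ref{pro1}. Hence $\xi^*_n=\xi_n^*$, and every subsequence gives the same last coordinate, so $\xi_n(u_R)\to\xi_n^*$ as $u_R\to u_n+$. (The first $n-1$ coordinates also converge, to the minimizer of $E_{n-1}$ restricted to $\xi_n=\xi_n^*$, but we do not need this.)

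\emph{Conclusion.} Condition (\ref{nc}) says exactly that $\xi_n^*<\xi_n^{n+1}$. By the convergence just proved, there is $\varepsilon>0$ such that $\xi_n(u_R)<\xi_n^{n+1}$ for all $u_R\in(u_n,u_n+\varepsilon)$. For these $u_R$ the minimizer yields a solution of type $n$, and uniqueness comes from strict convexity of $E$.

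The main obstacle is the middle step, specifically the fact that the equivalence ``$u(0)\le u_{n+1}\iff\xi_n\le\xi_n^{n+1}$'' rests on Proposition~\ref{cor2}. I would make this explicit by treating the candidate solution as a Stefan--Dirichlet solution with data $u(0)$. If $u(0)>u_{n+1}$, Proposition~\ref{cor2} places it above the Dirichlet solution with data $u_{n+1}$, whose last free boundary is $\xi_n^{n+1}$, so $u(\xi_n^{n+1})>u_n$ and therefore $\xi_n>\xi_n^{n+1}$. Strictly, the candidate is a function of type $n$ whose value $u(0)$ may exceed $u_{n+1}$, so Proposition~\ref{cor2} is applied to a one-sided extension of type $n$ in the $u(0)>u_{n+1}$ case. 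I expect its comparison proof (minimal crossing point and local ODE uniqueness) to go through unchanged in that setting.
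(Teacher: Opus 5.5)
Your proof is correct, but it takes a different route from the paper's.

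\textbf{The paper's route.} The paper never takes limits of the Robin minimizers. It works with the Dirichlet-to-Neumann map and sets $f(r)=\Phi^{DN}(r)-\beta(r-u_R)$ on $[u_n,u_{n+1}]$.
\begin{itemize}
\item By Lemma~\ref{lem2}, $f(u_n)=\gamma_n+\beta(u_R-u_n)>0$.
\item Writing $\gamma_{n+1}=k_n(u_{n+1}-u_n)/(a_n\sqrt{\pi}F(\xi_n^{n+1}/a_n))$ and $\beta=k_n/(a_n\sqrt{\pi}F_n)$ shows that $f(u_{n+1})<0$ once $u_R-u_n<\varepsilon=\frac{k_n(u_{n+1}-u_n)}{a_n\beta\sqrt{\pi}}[1/F_n-1/F(\xi^{n+1}_n/a_n)]$.
\item The intermediate value theorem then gives a Dirichlet datum $r\in(u_n,u_{n+1})$. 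The Stefan--Dirichlet solution for $r$ is already a genuine type-$n$ solution satisfying (\ref{Robn}).
\item Uniqueness comes from strict convexity of $E$, exactly as in your argument.
\end{itemize}

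\textbf{Comparison.} The paper's route yields an explicit $\varepsilon$. It also never has to decide whether the minimizer of $E$ is physical, because it only uses the continuity and monotonicity of $\Phi^{DN}$ that were already proved.

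Your route stays inside the Robin potential and reuses the compactness argument of Proposition~\ref{pro1}. It also gives the limit $\xi_n(u_R)\to a_nF^{-1}(F_n)$ as a by-product. The costs are that $\varepsilon$ is not explicit, and that you must convert $\xi_n<\xi_n^{n+1}$ into $u(0)\le u_{n+1}$ through a comparison principle applied to a possibly non-physical candidate.

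\textbf{The extension of Proposition~\ref{cor2}.} This extension does hold, and you should state why rather than just expect it.
\begin{itemize}
\item The Dirichlet solution with $u_D=u_{n+1}$ is strictly decreasing, so it lies strictly below $u_{n+1}$ for $\xi>0$.
\item Hence any first crossing value $u_*$ is strictly below $u_{n+1}$.
\item To the right of $\xi_*$, and just to its left, the candidate consists of ordinary $F$-profiles in phases $\le n$. So the part of the candidate above $u_{n+1}$ never enters the minimal-crossing, strict-convexity and local-ODE steps.
\end{itemize}

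\textbf{Notation.} You use $\xi^*_n$ both for the last coordinate of the limit point and for the paper's threshold $a_nF^{-1}(F_n)$. As written, the sentence ``rule out $\xi^*_n>\xi_n^*$'' compares a symbol with itself, so rename one of the two quantities.
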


\begin{proof}
We introduce the function $f(r)=\Phi^{DN}(r)-\beta(r-u_R)$, $r\in [u_,u_{n+1}]$, where $\Phi^{DN}$ is the Dirichlet-to-Neumann function (\ref{mapDN}). Obviously, Robin condition (\ref{Robn}) reduces to the equality
$f(u(0))=0$. Notice that $f(u_n)=\gamma_n+\beta(u_R-u_n)>0$,
\[f(u_{n+1})=\gamma_{n+1}+\beta(u_R-u_{n+1})=\frac{k_n(u_{n+1}-u_n)}{a_n\sqrt{\pi}F(\xi^{n+1}_n/a_n)}-\beta(u_{n+1}-u_n)+\beta(u_R-u_n).\]
Since $\beta=\frac{k_n}{a_n\sqrt{\pi}F_n}$, this expression can be written as
\[\frac{k_n(u_{n+1}-u_n)}{a_n\sqrt{\pi}}[1/F(\xi^{n+1}_n/a_n)-1/F_n]+\beta(u_R-u_n).\]
By (\ref{nc}) $F_n<F(\xi^{n+1}_n/a_n)$ and we conclude that $f(u_{n+1})<0$ for
\[u_R-u_n<\varepsilon=\frac{k_n(u_{n+1}-u_n)}{a_n\beta\sqrt{\pi}}[1/F_n - 1/F(\xi^{n+1}_n/a_n)].\]
Since the continuous function $f(r)$ changes sign on the segment $[u_n,u_{n+1}]$ there exists
$u(0)\in (u_n,u_{n+1})$ such that $f(u(0))=0$. The corresponding solution of the Stefan-Dirichlet problem will be a desired solution of problem (\ref{1}), (\ref{St}), (\ref{Robn}) of type $n$. As we have already demonstrated, such a solution is unique. The proof is complete.
\end{proof}

\end{document}